\title[Density of Lipschitz Mappings]{\protect{Lipschitz Homotopy and Density of Lipschitz Mappings in Sobolev Spaces}}
\author{Piotr Haj\l{}asz}\author{Armin Schikorra}
\address{Piotr Haj\l{}asz, Department of Mathematics, University of Pittsburgh, 301
  Thackeray Hall, Pittsburgh, PA 15260, USA, {\tt hajlasz@pitt.edu}}
\address{Armin Schikorra, Max-Planck Institut MiS Leipzig, Inselstr. 22, 04103 Leipzig, Germany, {\tt armin.schikorra@mis.mpg.de}}
\thanks{P.H. was supported by NSF grant DMS-1161425. A.S. was supported by DAAD fellowship D/12/40670}
\def\eps{\varepsilon}
\def\vi{\varphi}
\def\M{{\mathcal M}}
\def\N{{\mathcal N}}
\newtheorem{theorem}{Theorem}
\newtheorem{lemma}[theorem]{Lemma}
\newtheorem{corollary}[theorem]{Corollary}
\newtheorem{proposition}[theorem]{Proposition}
\def\diam{{\rm diam\,}}
\def\lip{{\rm Lip\,}}
\theoremstyle{definition}
\newtheorem{remark}[theorem]{Remark}
\newtheorem{definition}[theorem]{Definition}
\newtheorem{question}[theorem]{Question}
\newcommand{\barint}{
\rule[.036in]{.12in}{.009in}\kern-.16in \displaystyle\int }
\newcommand{\barcal}{\mbox{$ \rule[.036in]{.11in}{.007in}\kern-.128in\int $}}
\def\eqn#1$$#2$${\begin{equation}\label#1#2\end{equation}}
\def\vint#1_#2{-\kern-#1pt\int_{#2}}
\newcommand{\bbbr}{\mathbb R}
\newcommand{\bbbh}{\mathbb H}
\newcommand{\Sph}{\mathbb S}
\newcommand{\B}{\mathbb B}
\def\mvint_#1{\mathchoice
          {\mathop{\vrule width 6pt height 3 pt depth -2.5pt
                  \kern -8pt \intop}\nolimits_{\kern -3pt #1}}%
          {\mathop{\vrule width 5pt height 3 pt depth -2.6pt
                  \kern -6pt \intop}\nolimits_{#1}}%
          {\mathop{\vrule width 5pt height 3 pt depth -2.6pt
                  \kern -6pt \intop}\nolimits_{#1}}%
          {\mathop{\vrule width 5pt height 3 pt depth -2.6pt
                  \kern -6pt \intop}\nolimits_{#1}}}
\numberwithin{theorem}{section} \numberwithin{equation}{section}
\begin{document}

\subjclass[2000]{Primary 46E35; Secondary 55Q70}
\keywords{Sobolev mappings, density, Lipschitz homotopy groups, metric spaces}

\sloppy


\begin{abstract}
We construct a smooth compact $n$-dimensional manifold $Y$
with one point singularity such that all its Lipschitz homotopy groups
are trivial, but Lipschitz mappings $\lip(\Sph^n,Y)$ are not dense in
the Sobolev space $W^{1,n}(\Sph^n,Y)$. On the other hand we show that
if a metric space $Y$ is Lipschitz $(n-1)$-connected, then Lipschitz
mappings $\lip(X,Y)$ are dense in $N^{1,p}(X,Y)$ whenever the Nagata dimension
of $X$ is bounded by $n$ and the space $X$ supports the $p$-Poincar\'e inequality. 
\end{abstract}

\maketitle

\section{Introduction}
\label{introduction}

Let $\M$ and $\N$ be compact Riemannian manifolds, $\partial\N=\emptyset$. Being motivated by 
the theory of harmonic mappings Eells and Lemaire, \cite{eellsl},
asked a question whether smooth mappings $C^\infty(\M,\N)$ are dense in the space
of Sobolev mappings between manifolds $W^{1,p}(\M,\N)$.
Here we assume that $\N$ is isometrically embedded in an Euclidean space $\bbbr^\nu$ and we define
$$
W^{1,p}(\M,\N)=\{ u\in W^{1,p}(\M,\bbbr^\nu):\, u(x)\in\N\ \text{a.e.}\}.
$$
The space is equipped with the metric of $W^{1,p}(\M,\bbbr^\nu)$.
Equivalently one may ask about density of Lipschitz mappings. This is indeed equivalent question,
because Lipschitz mappings can be approximated by smooth mappings in the Sobolev norm.
If $p\geq \dim \M$, then smooth mappings are dense in $W^{1,p}(\M,\N)$,
by the theorem of Schoen and Uhlenbeck \cite{schoenu1,schoenu2}, but if $p<\dim\M$, the answer depends
on the topology of manifolds $\M$ and $\N$. 
The following necessary condition for the density is due to
Bethuel and Zheng \cite{bethuelz}.
\begin{proposition}
\label{necessary}
If $\pi_{[p]}(\N)\neq \{0\}$ and $1\leq p<\dim\M$, then the smooth mappings
$C^\infty(\M,\N)$ are not dense in $W^{1,p}(\M,\N)$.
\end{proposition}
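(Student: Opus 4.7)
The plan is to exhibit a map $u\in W^{1,p}(\M,\N)$ with a prescribed topological singularity that no sequence of smooth $\N$-valued maps can approximate in $W^{1,p}$. Let $k:=[p]$, so that $k\le p<k+1$ and, by hypothesis, $k<n:=\dim\M$. Fix a smooth representative $\phi\colon\Sph^k\to\N$ of a nonzero class in $\pi_k(\N)$.

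First I would perform the local construction. In a product coordinate chart $B^{k+1}\times B^{n-k-1}\subset\M$, set
$$
u(y,z)=\phi\!\left(\frac{y}{|y|}\right).
$$
A direct computation gives $|\nabla u(y,z)|\lesssim|y|^{-1}$, and by Fubini together with polar coordinates in $y$,
$$
\int|\nabla u|^p\lesssim\int_{B^{n-k-1}}dz\int_{B^{k+1}}|y|^{-p}\,dy<\infty,
$$
the inner integral being finite precisely because $p<k+1$. Hence $u\in W^{1,p}$ on the chart, and a standard gluing/dipole construction (possibly introducing a compensating singularity elsewhere so that the overall topological obstruction vanishes outside a small ball) extends it to a map, still denoted $u$, in $W^{1,p}(\M,\N)$ which agrees with the singular model above in a neighborhood of the slice $\{y=0\}$.

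Next, suppose toward a contradiction that there exist $u_j\in C^\infty(\M,\N)$ with $u_j\to u$ in $W^{1,p}$. Applying Fubini in the $z$-variable and then polar coordinates in $y$, and passing to a subsequence, I would select $z_0\in B^{n-k-1}$ and a radius $r_0\in(0,1)$ so that on the $k$-sphere $\Sigma:=\{|y|=r_0\}\times\{z_0\}\cong\Sph^k$,
$$
u_j|_\Sigma\longrightarrow u|_\Sigma\quad\text{in }W^{1,p}(\Sigma,\bbbr^\nu).
$$
By construction $u|_\Sigma$ is exactly $\phi$, which represents a nontrivial class in $\pi_k(\N)$, while each $u_j|_\Sigma$ bounds the smooth disk-map $u_j|_{\{|y|\le r_0\}\times\{z_0\}}\colon B^{k+1}\to\N$ and is therefore nullhomotopic.

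The final task is to derive a contradiction from the fact that nullhomotopic maps $W^{1,p}$-approximate a non-nullhomotopic one on $\Sph^k$. When $p>k$, the Sobolev embedding $W^{1,p}(\Sph^k)\hookrightarrow C^0$ gives uniform convergence along a subsequence; composing with the nearest-point retraction of a tubular neighborhood of $\N$ then produces, for large $j$, a continuous homotopy in $\N$ from $u_j|_\Sigma$ to $\phi$, so $[\phi]=0$. The borderline case $p=k$ (which occurs only when $p$ is an integer) is the main technical obstacle: here one uses that $W^{1,k}(\Sph^k,\N)\subset \mathrm{VMO}(\Sph^k,\N)$ and invokes the Brezis--Nirenberg theory of VMO-maps, which assigns a well-defined homotopy class and shows it is stable under VMO-convergence; since $W^{1,k}$-convergence implies VMO-convergence, again $[\phi]=0$, contradicting the choice of $\phi$ and finishing the proof.
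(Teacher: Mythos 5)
The paper does not give a proof of Proposition~\ref{necessary}: it attributes the result to Bethuel and Zheng and refers to \cite{hajlasz2,hajlasz10}. Your argument is correct and is essentially the standard one from those sources: a codimension-$(k+1)$ singular map of type $\phi$ with finite $W^{1,p}$ energy precisely because $p<k+1$, Fubini slicing onto a $k$-sphere linking the singular set, and stability of the homotopy class on $\Sph^k$ under $W^{1,p}$ convergence for $p\ge k$ (Morrey embedding for $p>k$, VMO/Brezis--Nirenberg at the critical exponent $p=k$). Incidentally, in Section~\ref{example} the paper proves the analogous non-density for its specific example by a degree argument, replacing the homotopy-class-stability step with continuity of $\int\det Df$ in $W^{1,n}$; that is cleaner but works only when the homotopy obstruction pairs with a top cohomology class, which fails for torsion elements of $\pi_k(\N)$, so for the full generality of Proposition~\ref{necessary} the VMO route (or some equivalent stability-of-homotopy-class lemma) is the right one.

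The one place you should be more careful is the global construction. The local model $\phi(y/|y|)$ on $B^{k+1}\times B^{n-k-1}$ has boundary values that cannot in general be extended continuously to $\M$ minus the chart, precisely because $\phi$ is not nullhomotopic; your parenthetical about a ``compensating singularity'' is the right idea but needs to be made concrete, otherwise the reader may wonder whether the extension exists at all. One clean way: write $\Sph^n=\{(x,y)\in\bbbr^{k+1}\times\bbbr^{n-k}:|x|^2+|y|^2=1\}$ and set $w=\phi\circ\eta$, where $\eta(x,y)=x/|x|$. Then $w\in W^{1,p}(\Sph^n,\N)$ for $p<k+1$, its singular set is the $(n-k-1)$-sphere $\{x=0\}$, and $w$ restricts to $\pm\phi$ on small $k$-spheres linking that set, so the dipole is already built in. Precomposing $w$ with the collapse $B^n\to\Sph^n$ of the boundary of a small coordinate ball in $\M$ to a point, and extending by a constant outside that ball, gives the desired $u\in W^{1,p}(\M,\N)$ with no residual gluing obstruction, after which the rest of your argument applies verbatim.
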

Here $\pi_k$ stands for the homotopy group and $[p]$ is the integral part of $p$.
This result is relatively easy to prove (see also \cite{hajlasz2,hajlasz10}).
Bethuel \cite{bethuel1} proved that in the local case (mappings from a ball)
this condition is also sufficient.
The proof of sufficiency is, however, very difficult (see \cite{hangl2} for 
corrections to Bethuel's paper).
\begin{theorem}
\label{bethuel}
If $1\leq p<n$, then smooth mappings $C^\infty(\B^n,\N)$ are dense in
$W^{1,p}(\B^n,\N)$ if and only if $\pi_{[p]}(\N)=0$.
\end{theorem}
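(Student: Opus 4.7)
The plan is to prove the sufficiency direction, since necessity is Proposition~\ref{necessary}. Assume $\pi_{[p]}(\N)=0$ and fix $u\in W^{1,p}(\B^n,\N)$; given $\delta>0$ I must produce a Lipschitz $v:\B^n\to\N$ with $\|u-v\|_{W^{1,p}}<\delta$. The high-level strategy is to build $v$ via a skeleton-first construction on a cubic grid at small scale $\eps>0$: approximate $u$ by a Lipschitz $\N$-valued map on the $[p]$-skeleton, extend across cells of dimension $[p]+1$ using the topological hypothesis, and fill in the higher-dimensional cells by homogeneous extension from judiciously chosen centers.

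First I would fix a cubic grid on $\B^n$ at scale $\eps$ and use Fubini's theorem to pick a generic translation so that, for each $k\leq n-1$, the restriction of $u$ to the $k$-skeleton $\Sigma^k$ lies in $W^{1,p}$ on each $k$-face, with the sum of $p$-energies bounded (up to constants) by $\eps^{k-n}\|\nabla u\|_{L^p(\B^n)}^p$. When $[p]<p$, the restriction $u|_{\Sigma^{[p]}}$ is continuous after modification on a null set by the Sobolev embedding in dimension $[p]$, so standard mollification followed by nearest-point projection to a tubular neighborhood of $\N$ yields a Lipschitz $v_0:\Sigma^{[p]}\to\N$ close to $u|_{\Sigma^{[p]}}$ in $W^{1,p}$. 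When $p$ is an integer, $u|_{\Sigma^{[p]}}$ is only VMO, and I would use the Brezis--Nirenberg framework for maps of small mean oscillation into $\N$ to still obtain a Lipschitz approximation.

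Next, I would extend $v_0$ cell by cell. On each $([p]+1)$-cell $\sigma$, the map $v_0|_{\partial\sigma}$ is defined on $\partial\sigma\cong\Sph^{[p]}$ and is nullhomotopic in $\N$ by hypothesis, so it admits a continuous extension to $\sigma$ which, after smoothing, is Lipschitz with controlled constant. On each $k$-cell with $[p]+1<k\leq n$, I would extend $v_0$ by homogeneous (radial) extension from a center $a_\sigma\in\sigma$, chosen by an averaging argument so that the $W^{1,p}$ energy on $\sigma$ is at most $C\eps$ times the $W^{1,p}$ energy of $v_0$ on $\partial\sigma$; this step uses $p<k$ in an essential way. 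The resulting map is Lipschitz on $\B^n$ except possibly at finitely many isolated points inside top cells, which are $W^{1,p}$-removable since $p<n$ and can be smoothed out by one further application of the homotopy hypothesis.

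The hard part is making the approximation genuinely $W^{1,p}$-close to $u$, not merely ``$\N$-valued with controlled energy.'' The key technical device is a good/bad cube dichotomy: on cubes where the local $W^{1,p}$ energy of $u$ is small, $u$ is automatically close to a constant in mean and a crude approximation suffices, while bad cubes are sparse by a Chebyshev-type bound so their total contribution is $o(1)$ as $\eps\to 0$. Matching the two types of extensions consistently across shared faces, so that the global map is well-defined and Lipschitz while the total error telescopes below $\delta$, is the most delicate step; it is precisely the place where Bethuel's original argument had the gaps later repaired by Hang and Lin \cite{hangl2}.
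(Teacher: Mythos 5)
The paper does not prove this theorem; it is quoted as a known result of Bethuel \cite{bethuel1}, with the explicit warning that the proof of sufficiency is ``very difficult'' and that \cite{hangl2} is needed to repair gaps in the original argument. So there is no proof in the paper against which to compare your attempt, and your proposal must be judged on its own.

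Your sketch does capture the broad architecture of Bethuel's argument: generic cubic grid, Fubini slicing to control traces on the $[p]$-skeleton, Lipschitz (Morrey/VMO) approximation on that skeleton, filling $([p]+1)$-cells by the nullhomotopy hypothesis, homogeneous extension on higher cells with a well-chosen center so that $p<k$ gives integrability, and a good/bad-cube dichotomy to control the global error. But as you yourself concede in the final paragraph, the steps that make this a theorem rather than a heuristic --- the consistency of cell-by-cell extensions across shared faces, the quantitative energy bookkeeping, the passage from ``$\N$-valued with controlled energy'' to genuine $W^{1,p}$-closeness, and the removal of the residual point singularities (which, contrary to your phrasing, are not simply ``$W^{1,p}$-removable''; the homogeneous extension produces maps in the class $R^{1,p}$ of maps smooth away from a codimension-$[p]{+}1$ singular set, and a separate argument, again using $\pi_{[p]}(\N)=0$, is needed to smooth those singularities) --- are precisely the content of \cite{bethuel1} and \cite{hangl2}. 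A proposal that identifies the hard step as ``the most delicate step'' and attributes it to the literature has not supplied a proof of the sufficiency direction; it has supplied a table of contents. For the necessity direction your deferral to Proposition~\ref{necessary} is appropriate. The bottom line: nothing you wrote is wrong as an outline, but it is an outline, and the theorem is genuinely hard; citing Bethuel and Hang--Lin, as the paper does, is the honest way to invoke it.
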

In the general case of $W^{1,p}(\M,\N)$, $1\leq p<\dim M$,
the necessary condition $\pi_{[p]}(\N)=0$ is not always sufficient for the density,
see \cite{hangl1} for an example. The necessary and sufficient condition has been discovered 
by Hang and Lin in \cite{hangl2}. While we will not state this condition here, we will
state a sufficient condition that was obtained earlier in 
\cite{hajlasz2}, because this condition will play a role in what is to follow.
\begin{proposition}
\label{h}
If $\pi_1(\N)=\pi_2(\N)=\ldots=\pi_{[p]}(\N)=0$, then
$C^\infty(\M,\N)$ mappings are dense in $W^{1,p}(\M,\N)$.
\end{proposition}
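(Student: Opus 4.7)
The plan is to combine mollification with a skeleton-by-skeleton construction on a fine cubulation of $\M$. Given $u\in W^{1,p}(\M,\N)$ and $\varepsilon>0$, first pick $v\in C^\infty(\M,\bbbr^\nu)$ with $\|v-u\|_{W^{1,p}}<\varepsilon$, using the density of smooth maps in the ambient Sobolev space. Choose a cubulation $\mathcal K_\eta$ of $\M$ of mesh $\eta$, and by a Chebyshev/Fubini argument over translates fix it so that the \emph{bad set}, the union of cubes $Q$ for which $v$ fails to send a neighborhood of $Q$ into a fixed tubular neighborhood $U$ of $\N$, carries only a small fraction of the $W^{1,p}$-energy of $v$. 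On the complementary \emph{good} cubes, set the approximation to be $\pi\circ v$, where $\pi\colon U\to\N$ is the smooth nearest point retraction; this is smooth, $\N$-valued, and close to $v$ because $\pi$ is Lipschitz.

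On the bad part one builds the map inductively over skeleta, exploiting the vanishing of the low-dimensional homotopy groups of $\N$. At each vertex of the bad region, choose a value in $\N$ near the local average of $v$. Having defined a Lipschitz $\N$-valued map $w$ on the bad $k$-skeleton for some $k\leq[p]$, extend it across each bad $(k+1)$-cell $\sigma$: the boundary map $w\colon\partial\sigma\to\N$ is null-homotopic because $\pi_k(\N)=0$, and on the compact manifold $\N$ this null-homotopy can be realized through Lipschitz maps, yielding a Lipschitz extension. Iterating up to $k=[p]$ produces a Lipschitz $\N$-valued map on the entire bad part of the $([p]+1)$-skeleton, adjusted in a collar so as to agree with $\pi\circ v$ along the good/bad interface.

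On each bad cube $Q$ of dimension $k\geq[p]+2$, extend $w$ to the interior by homogeneous (radial) extension from the barycenter $x_Q$. This is the step where the hypothesis on $p$ is essential: the homogeneous extension satisfies $|\nabla w(x)|\lesssim\lip(w|_{\partial Q})\,|x-x_Q|^{-1}$, and $\int_Q|x-x_Q|^{-p}\,dx<\infty$ holds precisely because $p<[p]+1\leq k$. This yields a bound on the $W^{1,p}$-energy of $w$ on $Q$ of the form $C\eta^{k-p}\lip(w|_{\partial Q})^p$. Gluing the good and bad pieces and then smoothing (by further mollification in the ambient space followed by nearest point projection away from the finitely many barycentric singularities) produces a map in $C^\infty(\M,\N)$ close to $u$. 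The principal obstacle, and the technical crux, is quantitative: one must propagate the Lipschitz constants through the skeletal induction so that $\lip(w|_{\partial Q})$ is dominated by the $L^1$-average of $|\nabla v|$ on a neighborhood of $Q$; with this bound in hand, summing the $W^{1,p}$-contributions from all bad cubes is controlled by $\|\nabla v\|_{L^p(\text{bad set})}^p$, which was made arbitrarily small by the initial choice of cubulation.
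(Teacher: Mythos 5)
The paper does not itself prove Proposition~\ref{h}; it is quoted from~\cite{hajlasz2}, and the argument there runs rather differently from yours. You follow a Bethuel-type, \emph{domain-side} scheme: cubulate $\M$ at a small mesh, separate good cubes (where a smooth approximant $v$ of $u$ stays in a tubular neighbourhood of $\N$, so nearest-point projection works) from bad ones, build an $\N$-valued Lipschitz map on the bad $([p]+1)$-skeleton by induction using $\pi_k(\N)=0$ for $k\le[p]$, and finish with homogeneous extension on cubes of dimension at least $[p]+2$. The proof in~\cite{hajlasz2} works instead on the \emph{target side}: from the $[p]$-connectedness of $\N\subset\bbbr^\nu$ one builds, once and for all, a locally Lipschitz retraction $P$ of $\bbbr^\nu\setminus\Sigma$ onto $\N$ where $\Sigma$ is a polyhedron of dimension $\nu-[p]-2$ and $|\nabla P(y)|\le C/\dist(y,\Sigma)$; the approximants are simply $P\circ u_\eps$ for mollifications $u_\eps$ of $u$, after a generic translation of $\Sigma$ chosen so that the composition has finite and small extra energy. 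This moves all the skeletal extension work to the fixed compact target and so avoids exactly the per-cube Lipschitz bookkeeping that you rightly flag as the technical crux of your version. Both routes are viable; the target-side one is considerably shorter.

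There is, however, a genuine gap in your closing step. After gluing, the map $w$ is $\N$-valued and Lipschitz away from the finitely many barycenters of high-dimensional bad cubes, but near such a barycenter $w$ is homogeneous of degree zero with a link map $h:\Sph^{d-1}\to\N$ that is in general \emph{not} null-homotopic (since $d-1\ge[p]+1$ and $\pi_{d-1}(\N)$ may be nontrivial). Consequently ``further mollification followed by nearest-point projection away from the singularities'' does not produce a smooth map $\M\to\N$: near a singular point the mollified map takes values far from the tubular neighbourhood of $\N$, and deleting those points leaves a map that is not defined on all of $\M$. You must add an argument that these homogeneous point singularities are removable in the $W^{1,p}$ topology. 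This is exactly where the inequality $p<[p]+1$ enters a second time: since every such singularity lies in a cube of dimension $d\ge[p]+2$, the link sphere has dimension $d-1>p$, and the classical Bethuel--Zheng shrinking/bubbling device then replaces $w$ near the barycenter by a smooth $\N$-valued map at arbitrarily small $W^{1,p}$ cost. With that step supplied, your outline gives a correct, if longer, alternative proof of the proposition.
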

If $p\geq\dim\M$, density is always true, so the interesting case is when 
$1\leq p<\dim\M$.

The theory of Sobolev mappings between manifolds has been extended
to the case of Sobolev mappings with values into metric spaces. The
first papers on this subject include the work of Ambrosio,
\cite{ambrosio}, on limits of classical variational problems and
the work of Gromov and Schoen, \cite{gromovs}, on Sobolev mappings
into the Bruhat-Tits buildings, with applications to rigidity
questions for discrete groups. Later the theory of Sobolev
mappings with values into metric spaces was developed in a more
elaborated form by Korevaar and Schoen, \cite{korevaars}, in their
approach to the theory of harmonic mappings into Alexandrov spaces
of non-positive curvature. Other papers on Sobolev mappings from a
manifold into a metric space include 
\cite{capognal,eellsf,jost1,jost2,jost3,jostz,reshetnyak,serbinowski}. Finally,
analysis on metric spaces, the theory of Carnot--Carath\'eodory
spaces and the theory of quasiconformal mappings between metric
spaces led to the theory of Sobolev mappings between metric
spaces, \cite{heinonenk,HKST,kronz,troyanov},
among which the theory of Newtonian-Sobolev mappings $N^{1,p}(X,Y)$
is particularly important.

The question in what way the results about density of smooth mappings 
between manifolds can be generalized
to the case of mappings between metric spaces was formulated by
Heinonen, Koskela, Shanmugalingam and Tyson, \cite{HKST}.
In this context one asks about density of Lipschitz mappings, because the class of smooth
mappings make no sense. In order to formulate the problem we need to
define the class $N^{1,p}(X,Y)$ of Sobolev mappings between metric spaces.

A {\em metric-measure space} $(X,d,\mu)$ is a metric space $(X,d)$ equipped with a Borel-regular measure $\mu$.
We say that the measure $\mu$ is {\em doubling} if there is a constant $C_d\geq 1$ such that for every ball
$B$ in $X$, $\mu(2B)\leq C_d\mu(B)$. Here and in what follows by $\sigma B$, $\sigma>0$ we denote the ball concentric with 
$B$ and with the radius $\sigma$ times that of $B$. 
In the paper we will {\em always} assume that the measure $\mu$ is doubling.
For simplicity we will also {\em always} assume that the diameter of $X$ is finite. Note that this and the doubling condition 
imply $\mu(X)<\infty$.

Let $U\subset X$ be open. Following \cite{heinonenk} we say that a Borel function $g:U\to [0,\infty]$
is an {\em upper gradient} of a Borel function $u$ defined in $U$ if
$$
|u(\gamma(b))-u(\gamma(a))|\leq \int_\gamma g
$$
for all rectifiable curves $\gamma:[a,b]\to U$. Then we say that the space $(X,d,\mu)$ {\em supports the
$p$-Poincar\'e inequality}, $1\leq p<\infty$ if there are constants $C_P>0$ and $\sigma\geq 1$ such that
for every ball $B$ in $X$, every $u\in L^1(\sigma B)$ and every upper gradient $g$ of $u$ in $\sigma B$
the following version of the Poincar\'e inequality is satisfied
$$
\barint_{B} |u-u_B|\, d\mu \leq C_P (\diam B)\left(\barint_{\sigma B} g^p\, d\mu\right)^{1/p}\, .
$$
Here the barred integral denotes the integral average and $u_B$ is the integral average of $u$ over $B$.

The Sobolev space $N^{1,p}(X,d,\mu)$ was introduced in \cite{shanmugalingam} and it is defined as follows.
We say that $u\in N^{1,p}(X,d,\mu)$ if $u\in L^p$ and there is an upper gradient $g\in L^p$ of $u$. 
The space is equipped with a norm which turns the space into a Banach space:
\begin{equation}
\label{norm}
\Vert u\Vert_{N^{1,p}}=\Vert u\Vert_p+\inf_g\Vert g\Vert_p.
\end{equation}
Here the infimum is taken over all upper gradients of $u$. To be more precise we have to 
identify functions $u,v\in N^{1,p}$ such that $\Vert u-v\Vert_{N^{1,p}}=0$,
just like we identify functions in $L^p$ that are equal almost everywhere. 

If $(V,\Vert\cdot\Vert)$ is a Banach space, then the vector valued Sobolev space $N^{1,p}(X,V)$ 
is defined in a similar manner \cite{HKST}: 
$u\in N^{1,p}(X,V)$ if $u\in L^p(X,V)$ and there is $0\leq g\in L^p(X)$
such that
$$
\Vert u(\gamma(b))-u(\gamma(a))\Vert\leq \int_\gamma g
$$
for all rectifiable curves $\gamma:[a,b]\to X$. Then $N^{1,p}(X,V)$
is a Banach space with respect to the norm \eqref{norm}.
Here we also call $g$ an upper gradient of $u$.
For more details, see \cite{HKST}. 
If $X=\M$ is a Riemannian manifold and $V$ is dual to a separable Banach space,
then this definition is equivalent to the classical definition of the vector
valued Sobolev space $W^{1,p}(\M,V)$ via the distributional derivatives, see
\cite{DHLT,hajlaszt}.

In the case of real valued functions the next result was proved in 
\cite[Theorem~3.2]{hajlaszk} from the Poincar\'e inequality via the telescoping argument.
It was extended to the Banach space valued case in \cite[Proposition~4.6]{HKST}.
Recall that we assumed that $\diam X<\infty$.
\begin{proposition}
\label{pointwise}
Suppose that the space $(X,d,\mu)$ supports the $p$-Poincar\'e inequality,
$(V,\Vert\cdot\Vert)$ is a Banach space, $u\in N^{1,p}(X,V)$, and $g$ is an upper gradient of $u$.
Then the pointwise inequality 
$$
\Vert u(x)-u(y)\Vert \leq Cd(x,y)\left( (\M g^p(x))^{1/p}+ (\M g^p(y))^{1/p}\right)
$$
holds a.e. with some constant $C$ independent of $u$ and $g$. Here $\M$ stands for
the Hardy-Littlewood maximal operator.
\end{proposition}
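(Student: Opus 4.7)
My plan is to reduce the Banach-valued estimate to its scalar analog via the triangle inequality and then run the standard telescoping chain of shrinking balls.

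\emph{Reduction to scalars.} For any fixed $v_0\in V$, the real-valued function $f_{v_0}(x)=\|u(x)-v_0\|$ satisfies
$$
|f_{v_0}(\gamma(b))-f_{v_0}(\gamma(a))|\leq \|u(\gamma(b))-u(\gamma(a))\|\leq \int_\gamma g
$$
by the reverse triangle inequality, so $g$ is an upper gradient of $f_{v_0}$ in the scalar sense. The assumed $p$-Poincar\'e inequality therefore applies to $f_{v_0}$ on every ball $B\subset X$. Taking the Bochner average $v_0=u_B\in V$ and using $\|u-u_B\|\leq \|u-v_0\|+\|v_0-u_B\|$, one deduces the vector-valued Poincar\'e inequality
$$
\barint_B \|u-u_B\|\,d\mu \leq 2C_P (\diam B)\left(\barint_{\sigma B}g^p\,d\mu\right)^{1/p}.
$$

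\emph{Telescoping chain.} Fix a Lebesgue point $x$ of $u$ and a radius $r>0$, and set $B_i=B(x,2^{-i}r)$. Doubling gives $\mu(B_i)\leq C\mu(B_{i+1})$, and $B_{i+1}\subset B_i$, so
$$
\|u_{B_{i+1}}-u_{B_i}\|\leq \barint_{B_{i+1}}\|u-u_{B_i}\|\,d\mu \leq C\barint_{B_i}\|u-u_{B_i}\|\,d\mu \leq C\,2^{-i}r\left(\barint_{\sigma B_i}g^p\,d\mu\right)^{1/p},
$$
the last step being the vector-valued Poincar\'e estimate applied on $B_i$. Since $x\in\sigma B_i$, each average $\barint_{\sigma B_i}g^p\,d\mu$ is bounded by $\M g^p(x)$. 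Summing the resulting geometric series in $i$ and invoking Lebesgue differentiation (so that $u_{B_i}\to u(x)$ at the Lebesgue point $x$) gives
$$
\|u(x)-u_{B_0}\|\leq Cr\,(\M g^p(x))^{1/p}.
$$

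\emph{Combining at $x$ and $y$, and the main obstacle.} Apply the telescoping estimate at $r=d(x,y)$ to both points; the base averages $u_{B(x,r)}$ and $u_{B(y,r)}$ both sit inside $\tilde B=B(x,2r)$, whose measure is comparable to that of each of them by doubling, so one further application of the Step~1 estimate on $\tilde B$ yields
$$
\|u_{B(x,r)}-u_{B(y,r)}\|\leq C\,r\,\bigl((\M g^p(x))^{1/p}+(\M g^p(y))^{1/p}\bigr),
$$
and three triangle inequalities finish the proof. The only nontrivial point in the vector-valued setting is the Lebesgue differentiation step for the Bochner averages $u_{B_i}$; this follows from the essentially separable range of $u\in L^p(X,V)$ together with the scalar differentiation theorem applied to $\|u(\cdot)-v\|$ as $v$ runs over a countable dense subset of that range.
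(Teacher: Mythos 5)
The paper does not prove this proposition; it cites it as known, with the scalar case in \cite{hajlaszk} and the Banach-valued extension in \cite{HKST}, both obtained by the telescoping argument you are imitating. Your overall strategy is therefore the standard one, but your ``reduction to scalars'' step has a genuine gap. With $v_0=u_B$ the displayed triangle inequality $\|u-u_B\|\le\|u-v_0\|+\|v_0-u_B\|$ collapses to $\|u-u_B\|\le\|u-u_B\|$ and deduces nothing. More to the point, the scalar $p$-Poincar\'e inequality applied to $f_{v_0}(x)=\|u(x)-v_0\|$ controls $\barint_B|f_{v_0}-(f_{v_0})_B|\,d\mu$, not $(f_{v_0})_B=\barint_B\|u-v_0\|\,d\mu$ itself; for instance a circle-valued map with $u_B=0$ has $\|u-u_B\|\equiv1$ on $B$, so $\barint_B|f_{u_B}-(f_{u_B})_B|=0$ while $\barint_B\|u-u_B\|=1$. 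Thus the Banach-valued Poincar\'e inequality, which you then use at every stage of the telescope and again on $\tilde B$, has not been established.

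The cleanest repair stays entirely scalar and avoids the vector-valued Poincar\'e inequality. Fix a point $x$ which is a Lebesgue point of $u$ in the Bochner sense and set $f_x(z)=\|u(z)-u(x)\|$. Then $g$ is an upper gradient of $f_x$, $f_x(x)=0$, and $x$ is a Lebesgue point of $f_x$ with value $0$; the same is true at a.e.\ $y$ because $|f_x(z)-f_x(y)|\le\|u(z)-u(y)\|$. Applying the scalar version of the proposition to $f_x$ at the pair $(y,x)$ gives directly $\|u(y)-u(x)\|=|f_x(y)-f_x(x)|\le Cd(x,y)\bigl((\M g^p(x))^{1/p}+(\M g^p(y))^{1/p}\bigr)$. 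Equivalently, if you prefer to keep your chain of Bochner averages $u_{B_i}$, telescope the scalar function $f_x$ instead and use $\|u_{B_i}-u(x)\|\le(f_x)_{B_i}$ to recover $\|u(x)-u_{B_0}\|\le Cr(\M g^p(x))^{1/p}$; the third step then goes through by comparing $(f_x)_{B(x,r)}$ and $(f_x)_{B(y,r)}$ through $B(x,2r)$ using the scalar Poincar\'e inequality for $f_x$ alone.
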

This inequality implies that on the set $\{ (\M g^p)^{1/p}\leq t\}$ the mapping $u$ is 
$2Ct$-Lipschitz
continuous. Using the Whitney decomposition of the complement of the set 
and the associated Lipschitz partition of unity one can extend the function 
to a Lipschitz function from $X$ to $V$. Since the Lipschitz function
differs from $u$ on a set of small measure standard estimates
lead to the following known result
(see e.g. \cite[Lemma~13]{hajlaszMathAnn}).
\begin{proposition}
\label{lip_app}
Suppose that the space $(X,d,\mu)$ supports the $p$-Poincar\'e inequality for some 
$1\leq p<\infty$ and $V$ is a Banach space. If $u\in N^{1,p}(X,V)$,
then for every $\eps>0$ there is $u_\eps\in \lip(X,V)$ such that
$\mu(\{x:\, u(x)\neq u_\eps(x)\})<\eps$ and $\Vert u-u_\eps\Vert_{1,p}<\eps$.
\end{proposition}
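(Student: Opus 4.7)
The plan is to use the pointwise Lipschitz estimate of Proposition~\ref{pointwise} to locate a large set on which $u$ is already Lipschitz, to extend that restriction to all of $X$ by a Whitney-type construction, and to let the exceptional set shrink as a truncation level tends to infinity.

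I would fix an upper gradient $g\in L^p(X)$ of $u$ and, by Proposition~\ref{pointwise}, a null set $N\subset X$ off which
$$
\|u(x)-u(y)\|\leq Cd(x,y)\bigl((\M g^p(x))^{1/p}+(\M g^p(y))^{1/p}\bigr).
$$
For $t>0$ set $E_t=\{x\in X\setminus N:\M g^p(x)\leq t^p\}$; since $\M g^p$ is lower semicontinuous, $E_t$ is essentially closed and $u|_{E_t}$ is $2Ct$-Lipschitz. Because $\mu$ is doubling, the open set $X\setminus E_t$ admits a Whitney cover $\{B_i\}$ with $\diam B_i\simeq\dist(B_i,E_t)$ together with a subordinate Lipschitz partition of unity $\{\varphi_i\}$; for each $i$ pick $x_i\in E_t$ with $d(x_i,B_i)\leq C\diam B_i$ and define
$$
v_t(x)=\begin{cases} u(x), & x\in E_t,\\ \sum_i\varphi_i(x)\,u(x_i), & x\in X\setminus E_t.\end{cases}
$$
The sum is a convex combination in the Banach space $V$, and the standard Whitney--McShane computation gives $v_t\in\lip(X,V)$ with Lipschitz constant at most $At$, where $A$ depends only on the doubling constant.

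To show $v_t\to u$ in $N^{1,p}(X,V)$ as $t\to\infty$, I would first invoke the weak type $(1,1)$ bound for the maximal operator on doubling spaces applied to $g^p\in L^1(X)$:
$$
t^p\mu(X\setminus E_t)\leq C\int_{\{g^p>t^p/2\}}g^p\,d\mu\longrightarrow 0\quad\text{as }t\to\infty,
$$
by absolute continuity of the integral. In particular $\mu(\{u\neq v_t\})\leq\mu(X\setminus E_t)\to 0$; and since $u-v_t$ is supported in $X\setminus E_t$ while $\|v_t\|$ there is controlled by averages of $\|u\|$ over nearby points of $E_t$, $\|u-v_t\|_{L^p(X,V)}\to 0$. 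Finally, the function
$$
g_t=\bigl(g+At\bigr)\chi_{X\setminus E_t}
$$
should be a $p$-weak upper gradient of $u-v_t$: on $X\setminus E_t$ it dominates $g$ (an upper gradient of $u$) plus the constant Lipschitz bound for $v_t$, while on $E_t$ the difference vanishes. Then
$$
\|g_t\|_p^p\leq 2^{p-1}\Bigl(\int_{X\setminus E_t}g^p\,d\mu+A^pt^p\mu(X\setminus E_t)\Bigr)\longrightarrow 0,
$$
so for $t$ sufficiently large $u_\eps:=v_t$ has both required properties.

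The main technical obstacle is justifying that $g_t$ is indeed a $p$-weak upper gradient of $u-v_t$ even though it vanishes on $E_t$: one must split every rectifiable curve $\gamma$ into its traces in $E_t$ and $X\setminus E_t$, using that $(u-v_t)\circ\gamma$ is constantly zero on the parts lying in the closed set $E_t$ so that only the $X\setminus E_t$-pieces contribute to the variation. Promoting this from a curve-by-curve statement to the $p$-weak upper gradient property relies on the Fuglede-type locality results available in PI spaces, after which one replaces $g_t$ by an honest upper gradient of arbitrarily close $L^p$-norm. This is the standard subtle step in Lipschitz truncation arguments in Newtonian spaces and is the real work of the proof.
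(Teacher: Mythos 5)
Your proposal is correct and follows the same route the paper sketches in the paragraph preceding the statement (the paper then cites the result to \cite[Lemma~13]{hajlaszMathAnn} rather than proving it in detail): truncate using the maximal function level sets from Proposition~\ref{pointwise}, perform a Whitney/McShane extension of $u|_{E_t}$ using a Lipschitz partition of unity subordinate to a Whitney cover of the complement, and pass to the limit $t\to\infty$ using the weak $(1,1)$ bound and absolute continuity of the integral. The paper's own proof of Theorem~\ref{hs2} uses exactly the same scheme, including the upper-gradient bookkeeping you worry about at the end.

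One small remark: the locality step you flag as the ``real work'' is more elementary than you suggest. Since $(g+At)$ is a genuine upper gradient of $u-v_t$, and $u-v_t\equiv 0$ on the closed set on which you restrict, one proves directly that $(g+At)\chi_{X\setminus E_t}$ is an honest upper gradient (not merely a $p$-weak one) of $u-v_t$: for a rectifiable curve $\gamma$, let $a'\le b'$ be the first and last parameters where $\gamma$ meets the closed set; the difference vanishes between them, and the two remaining sub-arcs lie in $X\setminus E_t$ up to a single endpoint of zero length. This is precisely the ``elementary lemma'' stated without proof in the paper's Section~\ref{proof}, and it requires no Fuglede-type machinery. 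Also note that $E_t$ may fail to be closed after you delete the null set $N$; the standard fix, used by the paper, is to pass to a closed subset $F_t\subset E_t$ with $t^p\mu(X\setminus F_t)\to 0$, which preserves every estimate you wrote.
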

Every separable metric space admits an isometric embedding into the Banach space of bounded
sequences $\ell^\infty$ (the Kuratowski embedding). This can be used to define the space of Sobolev mappings 
$N^{1,p}(X,Y)$ with values in $Y$. Namely if $\lambda:Y\to V$ is an isometric embedding 
then we define
$$
N^{1,p}(X,Y)=\{ u:X\to Y:\, \lambda\circ u\in N^{1,p}(X,V)\}.
$$
The space is equipped with the metric $d(u,v)=\Vert \lambda\circ u-\lambda\circ v\Vert_{N^{1,p}}$.
This definition resembles the definition of the class of Sobolev mappings between smooth manifolds,
but now instead of taking an embedding of the manifold into a Euclidean space we take 
an embedding of a metric space into a Banach space.

If $X=\M$ is a Riemannian manifold and $Y$ is embedded in a Banach space $V$ that is dual to 
a separable Banach space (that is always possible, because we can take $V=\ell^\infty$), then
we will write $W^{1,p}(\M,Y)$, because, as it was explained earlier, we have $N^{1,p}(\M,V)=W^{1,p}(\M,V)$.

Observe that each mapping $u\in N^{1,p}(X,Y)$ (or rather $\lambda\circ u$)
can be approximated by Lipschitz mappings with values
into $V$ (Proposition~\ref{lip_app}). In this setting Heinonen, Koskela, Shanmugalingam and Tyson \cite[Remark~6.9]{HKST} asked: 
{\em It is an interesting problem to determine
when one can choose the Lipschitz approximation to have values in the target $Y$. [\ldots]
For instance, one can ask to what extent Bethuel's results have analogs for general targets.}
The following partial answer was obtained in \cite[Theorem~6]{hajlaszMathAnn}.
\begin{theorem}
\label{h2}
Let $Y$ be a finite Lipschitz polyhedron and $1\leq p<\infty$. Then the class of Lipschitz
mappings $\lip(X,Y)$ is dense in $N^{1,p}(X,Y)$ for every metric-measure space $X$
of finite measure that supports the $p$-Poincar\'e inequality if and only if 
$\pi_1(Y)=\pi_2(Y)=\ldots=\pi_{[p]}(Y)=0$.
\end{theorem}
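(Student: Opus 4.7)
The plan is to prove the biconditional by treating the two implications separately, leaning on the tools assembled earlier in the excerpt.

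\emph{Necessity.} If $\pi_j(Y)\neq 0$ for some $1\leq j\leq[p]$, I would produce a counterexample in the spirit of Proposition~\ref{necessary}. Take $X=\B^n$ with $n=[p]+1$; this is a smooth Riemannian manifold of finite measure supporting the $p$-Poincar\'e inequality, with $N^{1,p}=W^{1,p}$. When $j=[p]$, let $f\colon\Sph^{[p]}\to Y$ be a Lipschitz representative of a nontrivial class and set $u(x)=f(x/|x|)$; this lies in $W^{1,p}(\B^n,Y)$ because $p<[p]+1$. A hypothetical approximating sequence $u_k\in\lip(\B^n,Y)$ with $u_k\to u$ in $N^{1,p}$ restricts, by Fubini on the spherical foliation of $\B^n\setminus\{0\}$, to maps $u_k|_{r\Sph^{[p]}}$ converging to $f$ in $W^{1,p}(r\Sph^{[p]},Y)$ on a positive-measure set of radii. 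Each such restriction is nullhomotopic (it Lipschitz-extends to the ball), while Sobolev stability of homotopy classes for $p$ at least the sphere dimension forces it to be homotopic to $f$, a contradiction. The cases $j<[p]$ reduce to this one via iterated product/dipole constructions.

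\emph{Sufficiency setup.} Assume $\pi_1(Y)=\cdots=\pi_{[p]}(Y)=0$. Given $u\in N^{1,p}(X,Y)$ and $\eps>0$, view $u$ as $\lambda\circ u\in N^{1,p}(X,V)$ and apply Proposition~\ref{lip_app} to obtain $v\in\lip(X,V)$ with $\mu(\{v\neq\lambda\circ u\})<\eps$ and $\|v-\lambda\circ u\|_{N^{1,p}}<\eps$. On the good set $v$ already takes values in $\lambda(Y)$, but on the small bad set it may leave $\lambda(Y)$. The plan is to post-compose with a Lipschitz map $\Phi\colon V\to\lambda(Y)$ fixing $\lambda(Y)$ pointwise, so that $u_\eps=\lambda^{-1}\circ\Phi\circ v\in\lip(X,Y)$, and to verify $u_\eps\to u$ in $N^{1,p}$ as $\eps\to 0$.

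\emph{Construction of $\Phi$ and the main obstacle.} Since $Y$ is a finite Lipschitz polyhedron, I would biLipschitz-embed it into some $\bbbr^N$; fix a Lipschitz triangulation of $\lambda(Y)$ and thicken it to a cell decomposition of a tubular neighborhood $U\supset\lambda(Y)$, on which a straightforward Lipschitz retraction $r\colon U\to\lambda(Y)$ exists. To extend $r$ to a Lipschitz $\Phi$ defined on a region containing $v(X)$, Whitney-decompose $\bbbr^N\setminus U$ into cubes and extend skeleton-by-skeleton: extension across a $k$-cell requires the attaching $(k-1)$-sphere to be nullhomotopic in $Y$, which holds exactly when $\pi_{k-1}(Y)=0$, i.e., for $k\leq[p]+1$; at higher dimensions one either truncates or applies a Bethuel--Hang--Lin-style singular filling, whose Sobolev cost has to be shown to be controlled. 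The principal obstacle, and the heart of the proof, is the quantitative estimate $\|u-u_\eps\|_{N^{1,p}}\to 0$: even though $\Phi$ may have a large Lipschitz constant on the bad set, the measure smallness of $\{v\neq\lambda\circ u\}$ combined with the pointwise bound of Proposition~\ref{pointwise}, the doubling of $\mu$, and a Whitney-cube decomposition of the bad set in $X$ should produce a small Sobolev defect. This metric-measure adaptation of the Bethuel filling argument is the delicate core of the proof.
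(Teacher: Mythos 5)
This theorem is not proved in the present paper; it is quoted from \cite[Theorem~6]{hajlaszMathAnn}, so there is no internal argument to compare against. Evaluating your sketch on its own terms, two gaps are serious.

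\emph{Necessity for $j<[p]$.} Your reduction ``via iterated product/dipole constructions'' cannot be made to work with $X=\B^n$ as the source. Bethuel's Theorem~\ref{bethuel} says that for a ball the \emph{only} obstruction is $\pi_{[p]}$: if $\pi_{[p]}(Y)=0$ then smooth (hence Lipschitz) maps are dense in $W^{1,p}(\B^n,Y)$ regardless of $\pi_j(Y)$ for $j<[p]$. So when $\pi_{j_0}(Y)\neq0$ for some $j_0<[p]$ while $\pi_{[p]}(Y)=0$, no ball furnishes the required counterexample; one must produce a different metric-measure space $X$, exactly as the Hang--Lin example \cite{hangl1} shows for manifold domains. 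Your plan leaves this case unaddressed, and with $X=\B^n$ it is not merely incomplete but false.

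\emph{Sufficiency: the retraction does not exist.} Post-composing with a global Lipschitz map $\Phi$ from (a neighborhood in) the ambient space onto $Y$ fixing $Y$ is only possible when $Y$ is a Lipschitz retract, which for a finite polyhedron requires \emph{all} homotopy groups to vanish, not just the first $[p]$. You acknowledge the obstruction at the $([p]+2)$-skeleton but then defer to a ``Bethuel--Hang--Lin-style singular filling, whose Sobolev cost has to be shown to be controlled.'' That deferred step \emph{is} the theorem; the entire content of the sufficiency direction (carried out in \cite{hajlaszMathAnn}) lies in replacing the nonexistent global $\Phi$ by a scale-dependent construction — a Whitney-type decomposition of the bad set $X\setminus F_t$ adapted to the doubling measure, a cell-by-cell extension across the polyhedron's skeleta using only $\pi_1=\cdots=\pi_{[p]}=0$, and an $L^p$-estimate showing that the modification on the bad set is controlled by $\int_{X\setminus F_t}(g+Ct)^p\,d\mu\to0$. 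Note also that the natural instinct to invoke Lang--Schlichenmaier (Lemma~\ref{us}) as in the proof of Theorem~\ref{hs2} is unavailable here: that route needs $Y$ Lipschitz $(n-1)$-connected for $n=\dim_N X$, and $\dim_N X$ is unbounded over the class of admissible $X$, while vanishing of $\pi_1,\dots,\pi_{[p]}$ gives no Lipschitz connectivity beyond level $[p]$. Your sketch correctly locates where the difficulty sits, but it contains no mechanism for overcoming it.
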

Note that this condition appeared also in Proposition~\ref{h}, but now it is necessary and sufficient.
While the theorem treats a general class of spaces $X$ as a source, the targets are
Euclidean-like, and it is an interesting question to investigate more general
targets. Of particular interest is the Heisenberg group $\bbbh_n$ which is a fundamental
example of a metric space in the analysis on metric spaces. The motivation for the investigation
of density of Lipschitz mappings in $N^{1,p}(X,\bbbh_n)$ stems from the theory of
harmonic mappings with values in the Heisenberg group developed
by Capogna and Lin \cite{capognal}, just like the theory of harmonic mappings
was the original motivation for the questions of Eells and Lemaire. 
This problem  was investigated in \cite{DHLT,hajlaszst}. 

In the case of mappings between manifolds, the answer to the density question 
depends on the topology of manifolds and in particular on the homotopy groups
of the target, see Proposition~\ref{h} and Theorem~\ref{h2}. 
The Heisenberg group $\bbbh_n$ is homeomorphic
to the Euclidean space $\bbbr^{2n+1}$ and thus its homotopy groups are trivial. 
It turns out, however, that more appropriate objects to consider in the case of metric
targets are the {\em Lipschitz homotopy groups}. For the following definition, see \cite{DHLT}.

\begin{definition}
Let $(Y, y_0)$ be a pointed metric space. We define the {\em Lipschitz homotopy group}
$\pi_n^{\lip}(Y,y_0)$ in the same way as the classical homotopy group \cite{hatcher},
with the exception that both mappings and homotopies are required to be Lipschitz.
We {\em do not} require that the Lipschitz constant of the homotopy
between Lipschitz mappings $f,g:(Q^n,\partial Q^n)\to (Y,y_0)$, where $Q^n=[0,1]^n$, 
is comparable to the larger of the Lipschitz constant of the mappings $f,g$.
\end{definition}
In particular
$\pi_0^\lip(Y,y_0)$ is the set of Lipschitz-path-connected components, i.e. components
in which any two points can be connected by a rectifiable curve, and $\pi_0^\lip(Y,y_0)=0$
means that the space is rectifiably connected.
The following result is easy to prove \cite{DHLT}. 
\begin{proposition}
$\pi_n^\lip(Y,y_0)=0$ if and only if
every Lipschitz map $(\Sph^n,s_0)\to (Y,y_0)$ admits a Lipschitz extension
$\B^{n+1}\to Y$.
\end{proposition}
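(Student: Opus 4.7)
The plan is to exploit the standard bi-Lipschitz identification of $\Sph^n$ (with the round metric) with $Q^n/\partial Q^n$ (with the quotient metric). This yields a Lipschitz quotient $q\colon Q^n\to\Sph^n$ with $q(\partial Q^n)=\{s_0\}$, through which Lipschitz maps $(Q^n,\partial Q^n)\to(Y,y_0)$ correspond bijectively to Lipschitz maps $(\Sph^n,s_0)\to(Y,y_0)$, and similarly for relative homotopies.

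For ($\Leftarrow$), given Lipschitz $f\colon(Q^n,\partial Q^n)\to(Y,y_0)$, factor $f=\tilde f\circ q$ and use the hypothesis to extend $\tilde f$ to a Lipschitz $F\colon\B^{n+1}\to Y$. The straight-line homotopy $\alpha_t(z)=(1-t)z+t\,s_0$ sits inside the convex ball $\B^{n+1}$, fixes $s_0$, and is Lipschitz in $(z,t)$, so
$$
\tilde H(x,t)=F\bigl((1-t)\,q(x)+t\,s_0\bigr)
$$
is a Lipschitz relative null-homotopy of $f$: at $t=0$ it equals $f$, at $t=1$ it equals $F(s_0)=y_0$, and on $\partial Q^n$ we have $q(x)=s_0$, hence $\tilde H(x,t)=F(s_0)=y_0$ throughout.

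For ($\Rightarrow$), given Lipschitz $\tilde f\colon(\Sph^n,s_0)\to(Y,y_0)$, apply the hypothesis to $f=\tilde f\circ q$ to obtain a Lipschitz relative null-homotopy $H_0\colon Q^n\times[0,1]\to Y$. Since $H_0\equiv y_0$ on $\partial Q^n\times[0,1]$, it descends via $q\times\id$ to a Lipschitz $H\colon\Sph^n\times[0,1]\to Y$ with $H(\cdot,0)=\tilde f$, $H(\cdot,1)\equiv y_0$, and $H(s_0,\cdot)\equiv y_0$. Define
$$
F(z)=\begin{cases} y_0, & |z|\le\tfrac12,\\[2pt] H\bigl(z/|z|,\,2-2|z|\bigr), & \tfrac12\le|z|\le 1.\end{cases}
$$
The pieces match on $|z|=\tfrac12$ since $H(\cdot,1)\equiv y_0$, and $F(z)=H(z,0)=\tilde f(z)$ on $\Sph^n$.

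The main obstacle is verifying that $F$ is Lipschitz, and this is exactly the reason for the cut-off at $|z|=\tfrac12$. On the outer shell the polar identity $|z_1-z_2|^2=(r_1-r_2)^2+r_1r_2|\omega_1-\omega_2|^2$ together with $r_i\ge\tfrac12$ gives $|z_1-z_2|\gtrsim|r_1-r_2|+|\omega_1-\omega_2|$, so the Lipschitz constant $L$ of $H$ transfers directly. Across the seam, for $|z_1|\le\tfrac12\le|z_2|=r$ one uses only the time direction:
$$
d_Y\bigl(F(z_1),F(z_2)\bigr)=d_Y\bigl(H(z_2/r,\,1),\,H(z_2/r,\,2-2r)\bigr)\le L(2r-1)\le 2L\,|z_1-z_2|,
$$
the key input being $H(\cdot,1)\equiv y_0$. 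A naïve formula $F(r\omega)=H(\omega,1-r)$ on all of $\B^{n+1}$ would fail near the origin, because the Lipschitz constant of $H(\cdot,t)$ in $\omega$ need not shrink as $t\to 1$; collapsing $F$ to a constant on the inner half-ball bypasses this delicate radial-decay issue and reduces the seam bound to a trivial estimate invoking only Lipschitz continuity of $H$ in $t$.
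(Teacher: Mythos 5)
Your argument is correct. The paper itself gives no proof of this proposition; it records it as easy and refers to the DHLT paper for details, so there is nothing internal to compare against, but your proof is sound and follows the standard route one would expect.

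A few points worth being aware of. The statement ``Lipschitz maps $(Q^n,\partial Q^n)\to(Y,y_0)$ correspond bijectively to Lipschitz maps $(\Sph^n,s_0)\to(Y,y_0)$'' is the only place where something is silently assumed, and it deserves a one-line justification: if $f\colon Q^n\to Y$ is $L$-Lipschitz with $f|_{\partial Q^n}\equiv y_0$, then for $x_1,x_2\in Q^n$ one has both $d_Y(f(x_1),f(x_2))\leq L|x_1-x_2|$ and, via the triangle inequality through $y_0$, $d_Y(f(x_1),f(x_2))\leq L\big(d(x_1,\partial Q^n)+d(x_2,\partial Q^n)\big)$; since $d_{\Sph^n}(q(x_1),q(x_2))\asymp\min\bigl(|x_1-x_2|,\ d(x_1,\partial Q^n)+d(x_2,\partial Q^n)\bigr)$, the induced map on $\Sph^n$ is $CL$-Lipschitz. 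This same estimate is what you implicitly use, slice by slice, to show $H_0$ descends to a Lipschitz $H$ on $\Sph^n\times[0,1]$; spelling it out once would make the descent step airtight. Your polar-coordinate identity, the choice of the inner cutoff at $|z|=1/2$, and the seam estimate using only Lipschitz continuity in $t$ are all exactly right, and your closing remark correctly identifies why the naive $F(r\omega)=H(\omega,1-r)$ formula breaks down near the origin.
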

Here and in what follows $\Sph^n$ stands for the unit sphere $\Sph^n(0,1)$
in $\bbbr^{n+1}$.

In the case of compact smooth manifolds Lipschitz homotopy groups are equivalent to the classical
homotopy groups, because continuous mappings and homotopies can be approximated by smooth ones,
however, for non-smooth spaces the situation is different.
In \cite{DHLT} it was proved that $\pi_n^\lip(\bbbh_n)\neq 0$
and it was used to prove the corresponding lack of density of Lipschitz mappings:
{\em If $\M$ is a compact Riemannian manifold of dimension $\dim\M\geq n+1$,
then Lipschitz mappings $\lip(\M,\bbbh_n)$ are not dense in $W^{1,p}(\M,\bbbh_n)$
when $n\leq p<n+1$}. Similarly using a generalized Hopf invariant 
it was proved in \cite{hajlaszst} that $\pi^{\lip}_{4n-1}(\bbbh_{2n})\neq 0$
and the following result was concluded from it:
{\em If $\M$ is a compact Riemannian manifold of dimension $\dim\M\geq 4n$,
then Lipschitz mappings $\lip(\M,\bbbh_{2n})$ are not dense in 
$W^{1,p}(\M,\bbbh_{2n})$ when $4n-1\leq p<4n$.}
Thus the two non-density results are counterparts of Proposition~\ref{necessary}
for the Heisenberg group targets, so one might expect that this result 
extends for other metric targets. Thus one might ask the following question
for the class of mappings from a manifold $\M$ to a metric space $Y$:
\begin{question}
\label{Q1}
Suppose that $\pi_{[p]}^\lip(Y)\neq 0$ and $1\leq p<\dim\M$.
Is it true that Lipschitz mappings $\lip(\M,Y)$ are not dense in 
$N^{1,p}(\M,Y)$?
\end{question}
We do not know the answer. One may ask also whether Proposition~\ref{h}
and Theorem~\ref{h2} extend to more general targets.
\begin{question}
Suppose that $\pi_1^\lip(Y)=\ldots=\pi_{[p]}^\lip(Y)=0$.
Is it true that Lipschitz mappings $\lip(\M,Y)$ are dense in 
$W^{1,p}(\M,Y)$?
\end{question}
It turns out that the
answer is in the negative as the following results shows.
\begin{theorem}
\label{hs1}
For $n\geq 2$ there is a compact subset $Y\subset\bbbr^{n+1}$
and a point $p\in Y\cap\Sph^n$ such that
\begin{itemize}
\item $Y$ is homeomorphic to $\Sph^n$;
\item $Y\setminus \{ p\}$ is a smooth manifold diffeomorphic to 
$\Sph^n\setminus \{ p\}$. In fact there is a Lipschitz
continuous homeomorphism $\Phi:Y\to \Sph^n$ such that 
$\Phi:Y\setminus \{ p\}\to\Sph^n\setminus \{ p\}$ is a smooth diffeomorphism;
\item $\pi^\lip_k(Y)=0$ for all $k\geq 1$ (for any choice of $y_0$);
\item Lipschitz mappings $\lip(\Sph^n,Y)$ are not dense in $W^{1,n}(\Sph^n,Y)$.
\end{itemize}
\end{theorem}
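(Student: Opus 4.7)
The plan is to construct $Y$ by grafting a sequence of small smooth ``bubbles'' onto $\Sph^n\subset\bbbr^{n+1}$ that accumulate at a chosen point $p\in\Sph^n$. Concretely, one picks pairwise disjoint geodesic disks $D_k\subset\Sph^n$ of radius $r_k$ centered at points $y_k\to p$, with $r_k\downarrow 0$ and all $D_k$ avoiding $p$, and replaces each $D_k$ by a smooth $n$-disk $B_k$ attached to $\Sph^n\setminus\bigcup_k D_k$ along $\partial D_k$, realized as a ``bump'' in $\bbbr^{n+1}$ of diameter $R_k$ with $r_k<R_k\to 0$. The map $\Phi:Y\to\Sph^n$ is defined to be the identity on $\Sph^n\setminus\bigcup_k D_k$ and, on each $B_k$, a fixed smooth diffeomorphism $B_k\to D_k$ which is the identity on $\partial D_k$ and compresses the bubble onto its base; since $\Phi$ only contracts distances, it is Lipschitz, and by construction it is a homeomorphism whose restriction to $Y\setminus\{p\}$ is a smooth diffeomorphism onto $\Sph^n\setminus\{p\}$. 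The free parameters $r_k,R_k$ will be chosen so that $R_k/r_k\to\infty$ while $\sum_k R_k^n<\infty$.

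For the triviality of $\pi_k^\lip(Y)$, the key point is that $Y\setminus\{p\}$ is diffeomorphic, and bi-Lipschitz on every compact subset, to $\Sph^n\setminus\{p\}\cong\bbbr^n$, which is Lipschitz contractible. It therefore suffices to show that every Lipschitz map $f:\Sph^k\to Y$ is Lipschitz homotopic to one whose image avoids $p$. For $k\neq n$ this follows from a general-position argument combined with the smallness of bubbles far out in the sequence. For $k=n$ one uses the area inequality: an $L$-Lipschitz map $f:\Sph^n\to Y$ satisfies $\int|Df|^n\leq L^n\H^n(\Sph^n)$, so by the choice of parameters only finitely many bubbles $B_k$ can be covered with nonzero local degree, and $f$ can be pushed off the remaining bubbles and then off $p$. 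The final contraction is performed by transporting a radial homotopy in $\bbbr^n\cong Y\setminus\{p\}$ back through $\Phi^{-1}$, which is Lipschitz on the relevant compact piece.

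For the failure of density, take $u=\Phi^{-1}:\Sph^n\to Y$. On each $D_k$ the map $u$ is a diffeomorphism onto $B_k$ with $|Du|\sim R_k/r_k$, so $\int_{D_k}|Du|^n\sim R_k^n$ and $\int_{\Sph^n}|Du|^n\sim\sum_k R_k^n<\infty$, showing $u\in W^{1,n}(\Sph^n,Y)$. Suppose toward contradiction that $v_j\in\lip(\Sph^n,Y)$ converge to $u$ in $W^{1,n}$, each with Lipschitz constant $L_j$. For each $k$, pick a regular value $q_k$ in the interior of $B_k$ and form the local degree $d_k(w)\in\zed$ of a Sobolev map $w:\Sph^n\to Y$ at $q_k$, computed on the preimage of a small ball inside $B_k$. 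Two facts are needed: (i) $d_k(u)=1$ since $u$ is a homeomorphism; and (ii) $d_k(v_j)\to d_k(u)$ as $j\to\infty$ for each fixed $k$, because the local degree is $W^{1,n}$-continuous in the spirit of Brezis--Nirenberg's degree theory, the bubbles themselves being smooth $n$-disks so that the Jacobian and Brouwer degree make classical sense locally. On the other hand, an $L_j$-Lipschitz map cannot achieve local degree $1$ at $q_k$ once the neck geometry of $B_k$ forces an expansion factor $R_k/r_k$ exceeding $L_j$: the preimage $v_j^{-1}(B_k)$ must have $\H^n$-measure at least $\H^n(B_k)/L_j^n\sim R_k^n/L_j^n$, and a pigeonhole argument over $k$ combined with $\sum R_k^n<\infty$ shows the local degree must vanish for all sufficiently large $k$. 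Choosing $k=k(j)$ with $R_k/r_k>L_j$ then contradicts (ii).

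The main obstacle is this last step and, within it, the delicate matter of defining and proving $W^{1,n}$-continuity of the local degrees $d_k$ at the critical exponent, together with making the ``Lipschitz forces $d_k=0$'' implication rigorous and uniform across infinitely many bubbles. One must adapt the Brezis--Nirenberg degree theory to the singular target $Y$ (exploiting that each $B_k$ is intrinsically a smooth $n$-disk), and carefully analyze the geometry of the neck between $B_k$ and the rest of $Y$ so that the heuristic expansion-factor bound becomes a genuine lower bound on the Lipschitz constant. The tuning of $r_k,R_k$ must simultaneously enforce $R_k/r_k\to\infty$, $\sum R_k^n<\infty$, and the quantitative stability estimates needed for the degree argument.
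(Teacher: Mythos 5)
Your bubble construction takes a genuinely different route from the paper's, but it has a gap that I believe is fatal without a major new idea. The paper's construction is an oscillating graph over a punctured neighborhood of $p$, using a radial profile $a(t)$ chosen so that $\int_0^s|a'(t)|\,dt=\infty$ for every $s>0$ while $\int_0^1|a'(t)|^nt^{n-1}\,dt<\infty$. The first condition makes the distinguished point $p$ \emph{rectifiably isolated}: every curve in $Y$ reaching $p$ has infinite length. Consequently any Lipschitz map $f:\Sph^k\to Y$ ($k\geq 1$) either avoids $p$ or is the constant $p$, and triviality of all $\pi_k^{\lip}(Y)$ is immediate. The second condition, exploiting the dimensional gap between the $1$-dimensional curve length and the $n$-dimensional Jacobian (the weight $t^{n-1}$ is what makes the two compatible for $n\geq2$), is what gives $\phi\in W^{1,n}$. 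The whole example turns on being able to satisfy both conditions simultaneously.

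In your construction, $p$ stays rectifiably connected to the rest of $Y$ through arbitrarily short arcs on the unmodified part of the base sphere (your disks $D_k$ avoid $p$). So Lipschitz maps can hit $p$ and can even be surjective, and the entire burden of $\pi_n^{\lip}(Y)=0$ falls on your area pigeonhole. That pigeonhole, however, cannot work under the hypotheses you have already imposed. If a map $v$ with Lipschitz constant $L$ covers $B_k$ with nonzero local degree, the preimage $U_k=v^{-1}(B_k^\circ)$ satisfies $\H^n(U_k)\geq\H^n(B_k)/L^n$, and pairwise disjointness gives $\sum_k\H^n(B_k)\leq L^n\H^n(\Sph^n)$; this rules out covering all bubbles \emph{only} when $\sum_k\H^n(B_k)=\infty$. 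On the other hand, the Jacobian of $\Phi^{-1}$ on $D_k$ integrates to $\H^n(B_k)$, so by the arithmetic--geometric mean inequality $\int_{D_k}|D\Phi^{-1}|^n\geq c\,\H^n(B_k)$, and $\Phi^{-1}\in W^{1,n}$ \emph{forces} $\sum_k\H^n(B_k)<\infty$. With that finiteness one can, by hand, build a degree-one Lipschitz map $\Sph^n\to Y$ (map a disk $E_k$ of radius comparable to $R_k$ onto $B_k$ by contracting the angular direction down to the neck size $r_k$ and matching the neck length to the radial direction, which costs only an $O(1)$ Lipschitz constant; this is feasible provided $\sum R_k^n$ is small). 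A degree-one Lipschitz map is not nullhomotopic even classically, so $\pi_n^{\lip}(Y)\neq 0$ --- the opposite of what you need. In short, the very summability $\sum R_k^n<\infty$ needed for the Sobolev membership undoes the homotopy-group triviality in your scheme. The paper sidesteps this tension entirely by making $p$ unreachable, at which point the non-density proof is also simpler than yours: any approximating Lipschitz sequence $g_k\to\phi$ misses $p$, so $f_k:=\Phi\circ g_k:\Sph^n\to\Sph^n$ misses $p$, has degree $0$, yet $f_k\to\mathrm{id}$ in $W^{1,n}$; since the global degree is $W^{1,n}$-continuous, this is a contradiction, with no need for local-degree machinery \`a la Brezis--Nirenberg on a singular target.
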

Although $Y$ is Lipschitz homeomorphic to $\Sph^n$, it is not bi-Lipschitz
homeomorphic to the sphere.
The space $Y$ is not rectifiably connected: any curve in $Y$ connecting 
$p$ to another point has infinite length. Thus $\pi_0(Y)\neq 0$.
By adding a segment to $Y$ connecting $p$ with the antipodal point of $\Sph^n$, 
we obtain a set $Z$ that is rectifiably connected and has similar 
properties as $Y$.
\begin{theorem}
\label{hs3}
For $n\geq 2$ there is a compact set $Z\subset\bbbr^{n+1}$ 
such that $\pi_k^\lip(Z)=0$ for all $k\geq 0$ and 
Lipschitz mappings $\lip(\Sph^n,Z)$ are not dense in $W^{1,n}(\Sph^n,Z)$.
\end{theorem}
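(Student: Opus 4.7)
The plan is to take $Z = Y \cup I$ where $I \subset \bbbr^{n+1}$ is the closed Euclidean segment joining $p$ to its antipode $-p \in \Sph^n$; from the construction in Theorem~\ref{hs1} the antipode $-p$ lies in $Y$ as a smooth (hence rectifiably accessible) point, so $I \cap Y = \{p,-p\}$. Rectifiable connectedness $\pi_0^\lip(Z) = 0$ is then immediate: the smooth manifold $Y \setminus \{p\}$ is rectifiably connected, and the segment $I$ furnishes a rectifiable bridge from $-p$ to the otherwise Lipschitz-isolated point $p$.

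To establish $\pi_k^\lip(Z) = 0$ for $k \geq 1$, I will take a Lipschitz $f : \Sph^k \to Z$ and Lipschitz-homotope it in $Z$ to a map whose image lies in the contractible segment $I$. Consider the open set $U = f^{-1}(Y \setminus \{p,-p\})$; for each connected component $V$ of $U$, the restriction $f|_{\bar V}$ is a Lipschitz map into $Y$ with $f(\partial V) \subset I \cap Y = \{p,-p\}$. The key observation is that $f(\partial V)$ cannot meet both $p$ and $-p$: if it did, joining preimages of $p$ and $-p$ by a curve in the connected set $\bar V$ would yield a Lipschitz path in $Y$ from $p$ to $-p$, contradicting the non-rectifiability at $p$ granted by Theorem~\ref{hs1}. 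Hence $f|_{\partial V}$ is constantly $p$ or constantly $-p$. Using the Lipschitz $\infty$-connectedness of $Y$ (that is, $\pi_j^\lip(Y) = 0$ for all $j \geq 1$) together with a Lipschitz relative extension argument, I deform $f|_V$ rel $\partial V$ to that common boundary value. Carrying out this modification on each $V$ produces $\tilde f : \Sph^k \to I$ Lipschitz-homotopic to $f$ inside $Z$, and since $I$ is Lipschitz contractible, $\tilde f$, and hence $f$, extends to a Lipschitz map $B^{k+1} \to Z$.

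For the non-density, I will extend $\Phi : Y \to \Sph^n$ of Theorem~\ref{hs1} to a Lipschitz $\Phi' : Z \to \Sph^n$ by letting $\Phi'|_I$ parametrize a great-circle arc from $\Phi(p) = p$ to $\Phi(-p)$. Pick $u \in W^{1,n}(\Sph^n, Y)$ as in Theorem~\ref{hs1} with $\deg(\Phi \circ u) \neq 0$ (such $u$ is provided by the proof of Theorem~\ref{hs1}, for instance $u = \Phi^{-1}$), and view it as an element of $W^{1,n}(\Sph^n, Z)$. Suppose for contradiction $g_m \in \lip(\Sph^n, Z)$ converges to $u$ in $W^{1,n}$. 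Post-composition with the Lipschitz map $\Phi'$ preserves $W^{1,n}$ convergence, and since $u$ takes values in $Y$ we have $\Phi' \circ u = \Phi \circ u$; the classical continuity of the topological degree under $W^{1,n}$ convergence (for $p = n$) gives $\deg(\Phi' \circ g_m) = \deg(\Phi \circ u) \neq 0$ for large $m$. However, $\pi_n^\lip(Z) = 0$ (just proved) says each $g_m$ extends to a Lipschitz $B^{n+1} \to Z$, whence $\Phi' \circ g_m$ extends continuously to $B^{n+1} \to \Sph^n$ and must have degree $0$. This contradiction yields the non-density.

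The main technical obstacle is the Lipschitz relative extension used for $\pi_k^\lip(Z) = 0$: the components $V$ need not be ball-like, so deforming $f|_V$ rel $\partial V$ to a constant inside $Y$ requires a Lipschitz analogue of the cellular extension theorem valid for targets with vanishing Lipschitz homotopy groups. I plan to handle this inductively, using the simultaneous vanishing of every $\pi_j^\lip(Y)$ to extend across a Whitney-type skeletal decomposition of $\bar V$; alternatively, known Lipschitz extension results for Lipschitz $\infty$-connected targets apply directly.
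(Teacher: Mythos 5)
Your construction of $Z = Y \cup I$ and the argument for $\pi_0^{\lip}(Z)=0$ coincide with the paper's. For $\pi_k^{\lip}(Z)=0$ with $k\geq 1$ the paper's argument is much shorter and avoids the technical obstacle you flag: a Lipschitz $f:\Sph^k\to Z$ has image contained in $(Y\setminus U_p)\cup I$, where $U_p$ is a neighborhood of $p$ in $Y$ that $f$ must omit because any rectifiable curve in $Y$ emanating from $p$ has infinite length; that set is bi-Lipschitz to a closed ball with a segment attached at a boundary point, hence Lipschitz contractible, which gives the null-homotopy directly. Your decomposition into components $V$ of $f^{-1}(Y\setminus\{p,-p\})$ and a rel-boundary deformation over each $V$ is a detour, and the key step you defer --- deforming $f|_{\bar V}$ rel $\partial V$ to a constant using only the vanishing of all $\pi_j^{\lip}(Y)$ --- is not justified as stated. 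Vanishing Lipschitz homotopy groups give no uniform control on the Lipschitz constant of the null-homotopy, and the known skeletal/extension machinery (Lang--Schlichenmaier and its relatives) requires Lipschitz $n$-connectedness, which $Y$ precisely fails to have; this failure is the point of Theorem~\ref{hs1}. Your argument can be repaired, but only by first observing that $f(\bar V)$ is uniformly bounded away from $p$ and therefore lands in a bi-Lipschitz ball where straight-line homotopies are available --- and once one makes that observation, the component decomposition is superfluous and one is back to the paper's one-line proof.

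For the lack of density you take a genuinely different route. The paper post-composes the approximating maps $g_k$ with the Lipschitz retraction of $I$ onto $q$ to get maps $\tilde g_k$ into $Y$, shows $\|\tilde g_k - g_k\|_{1,n}\to 0$ by a small-set estimate (the Lebesgue measure of $g_k^{-1}(I)$ tends to $0$, and $\nabla g_k$ is bounded in $L^n$), and then invokes the proof of Theorem~\ref{hs1}; in particular it never needs to know $\pi_n^{\lip}(Z)=0$. You instead extend $\Phi$ to a Lipschitz $\Phi':Z\to\Sph^n$ and combine degree continuity with your claim $\pi_n^{\lip}(Z)=0$ to get $\deg(\Phi'\circ g_m)=0$. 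The scheme is sound, but the assertion that ``post-composition with the Lipschitz map $\Phi'$ preserves $W^{1,n}$ convergence'' is not automatic: $\Phi'$ is not $C^1$ across $p$ and $q$, the chain rule for a Lipschitz map composed with a Sobolev map whose image lies in a lower-dimensional set is delicate, and in general $v\mapsto\psi\circ v$ is not continuous on $W^{1,n}$ for Lipschitz $\psi$. You would need to argue this directly, for instance by the same small-set estimate the paper uses on $g_m^{-1}(I)$ combined with the smoothness of $\Phi$ on a neighborhood of $Y$ in $\bbbr^{n+1}$. Once that is supplied, your degree argument is a legitimate alternative, trading the paper's reduction to Theorem~\ref{hs1} for a self-contained extension-of-$\Phi$ plus $\pi_n^{\lip}(Z)=0$ argument.
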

Thus in order to obtain positive density results we need a stronger condition than
vanishing of Lipschitz homotopy groups.
The next definition is taken from \cite{langs}.
\begin{definition}
A metric space $Y$ is {\em Lipschitz $n$-connected} for some integer
$n\geq 0$ if there is a constant $\gamma\geq 1$ such that for each 
$k\in \{0,1,\ldots,n\}$, every $L$-Lipschitz map 
$f:\Sph^k\to Y$ admits a $\gamma L$-Lipschitz extension 
$F:\B^{k+1}\to Y$.
\end{definition}
The condition that the space is Lipschitz $n$-connected is stronger
than vanishing of the Lipschitz homotopy groups for $k\leq n$, because
we want to control the Lipschitz constant of the extension. 
As it will be explained later the set $Z$ is not Lipschitz
($n-1$)-connected despite the fact that all its Lipschitz homotopy groups are trivial.

\begin{definition}
The {\em Nagata} dimension $\dim_N X$ of a metric space $X$ is the least integer
$n$ with the property that there is $C>0$ such that for any $s>0$,
there is a covering $X=\bigcup_{i\in I} A_i$ such that
\begin{itemize}
\item $\diam A_i\leq Cs$ for all $i\in I$;
\item Every ball $B(x,s)$ intersects at most $n+1$ sets $A_i$.
\end{itemize}
If such $n$ does not exist, then $\dim_N X=+\infty$.
\end{definition}
The following result is easy to prove (see e.g. \cite[Lemma~2.3]{langs})
\begin{proposition}
If $X$ is equipped with a doubling measure, then $\dim_N X<\infty$.
\end{proposition}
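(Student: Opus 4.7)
The plan is to obtain a concrete bound on $\dim_N X$ from the doubling constant $C_d$ via the standard ``maximal net plus Voronoi-type partition'' construction.

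Fix $s>0$. First I would choose a maximal $s$-separated subset $\{x_i\}_{i\in I}\subset X$; such a set exists by Zorn's lemma (or a greedy construction, since a doubling space is separable). Maximality gives that for every $x\in X$ there exists some $i\in I$ with $d(x,x_i)<s$, so the balls $B(x_i,s)$ cover $X$. Next I would well-order $I$ and define, for each $i\in I$,
$$
A_i=\{x\in X:\ i\text{ is the least index with }d(x,x_i)<s\}.
$$
By construction $A_i\subset B(x_i,s)$, so $\diam A_i\leq 2s$, and the $A_i$'s form a partition of $X$. This gives the first bullet of the definition of Nagata dimension, with $C=2$.

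The main point is to bound, uniformly in $x$ and $s$, the number of indices $i$ for which $A_i$ meets $B(x,s)$. If $A_i\cap B(x,s)\neq\emptyset$, pick $y\in A_i\cap B(x,s)$; then $d(y,x_i)<s$, so $x_i\in B(x,2s)$. Since the family $\{x_i\}$ is $s$-separated, the balls $B(x_i,s/2)$ are pairwise disjoint. For every such $x_i$ we have $B(x,4s)\subset B(x_i,6s)$, hence by iterating the doubling inequality $\mu(2B)\leq C_d\,\mu(B)$ a bounded number of times one gets
$$
\mu(B(x_i,6s))\leq C_d^{k}\,\mu(B(x_i,s/2))
$$
for a fixed integer $k=k(C_d)$. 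Therefore
$$
\#\{i:\ A_i\cap B(x,s)\neq\emptyset\}\cdot \min_i\mu(B(x_i,s/2))\leq \mu(B(x,4s))\leq C_d^{k}\,\min_i\mu(B(x_i,s/2)),
$$
which forces this cardinality to be at most $N:=C_d^{k}$. Setting $n+1=N$ we conclude $\dim_N X\leq N-1<\infty$.

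There is no serious obstacle here: the only delicate step is the comparison of measures of balls centered at different points, which is handled by the standard trick of enlarging to a common ball and applying doubling a bounded number of times. Everything else (existence of the maximal net, the Voronoi-type tie-breaking that produces a genuine partition with controlled diameter) is soft set-theoretic bookkeeping. The argument also shows, incidentally, that the Nagata dimension is bounded purely in terms of $C_d$, with no further structural hypothesis on $X$.
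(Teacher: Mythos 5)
The paper does not actually prove this proposition; it cites Lang and Schlichenmaier \cite[Lemma~2.3]{langs}. Your argument is the standard proof of that lemma — maximal $s$-net, Voronoi-style tie-broken partition into sets $A_i\subset B(x_i,s)$, then a volume-counting bound from the doubling inequality — and it is correct in substance, with a uniform bound $\dim_N X\lesssim C_d^4$ exactly as you indicate.

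Two small points of rigor are worth tightening. First, writing $\#\{i:A_i\cap B(x,s)\neq\emptyset\}\cdot\min_i\mu(B(x_i,s/2))\leq\mu(B(x,4s))$ is delicate when you have not yet shown the index set to be finite: if it were infinite the left side would be $\infty\cdot\inf_i\mu(B(x_i,s/2))$, which is indeterminate if the infimum is $0$. The clean fix is either to note first that your second inequality already gives $\mu(B(x_i,s/2))\geq C_d^{-4}\mu(B(x,4s))$ for every relevant $i$ (so the infimum is positive), or simply to run the counting estimate over an arbitrary finite subfamily of the relevant indices and conclude that any such subfamily has at most $C_d^4$ elements. Second, the argument uses that $\mu(B(x,4s))>0$; this follows from doubling (together with $\diam X<\infty$, or by reaching any ball from a ball of positive measure in finitely many doublings) provided $\mu\not\equiv 0$, which is the standing convention for metric-measure spaces in this paper. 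With these touch-ups your proof is complete and agrees with the cited reference.
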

For spaces that are $n$-Lipschitz connected we have the following density result
which in some sense is a counterpart of Proposition~\ref{h} and
Theorem~\ref{h2} but with an
important additional restriction on the dimension of the domain,
see also Question~\ref{Q1}.
\begin{theorem}
\label{hs2}
Suppose that the space $(X,d,\mu)$ supports
the $p$-Poincar\'e inequality and $\dim_N X\leq n$.
If a separable metric space $Y$ is Lipschitz $(n-1)$-connected, then 
Lipschitz mappings $\lip(X,Y)$ are dense in $N^{1,p}(X,Y)$.
\end{theorem}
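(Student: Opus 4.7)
The plan is to combine Proposition~\ref{pointwise}, which gives pointwise Lipschitz behavior of a Sobolev map on a ``good set'', with the Lipschitz extension theorem of Lang and Schlichenmaier \cite{langs}, whose hypotheses are precisely Lipschitz $(n-1)$-connectedness of the target and Nagata dimension at most $n$ of the source.

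I would first set up the good set. Embed $Y$ isometrically into $V=\ell^\infty$ via the Kuratowski map $\lambda$, pick an upper gradient $g\in L^p$ of $u\in N^{1,p}(X,Y)$, and put $F_t:=\{(\M g^p)^{1/p}\leq t\}$, $E_t:=X\setminus F_t$. Proposition~\ref{pointwise} implies that, after adjusting $u$ on a set of measure zero, $u|_{F_t}:F_t\to Y$ is $2Ct$-Lipschitz with values in $Y$. A refinement of the weak-type maximal inequality, applied to $g\chi_{\{g>t/c_0\}}$ in place of $g$, yields the two decay statements $t^p\mu(E_t)\to 0$ and $\|g\|_{L^p(E_t)}\to 0$ as $t\to\infty$, both of which are needed in the convergence step.

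Next I would extend $u|_{F_t}$ to a Lipschitz map $\tilde u_t:X\to Y$ with Lipschitz constant at most $C_1 t$. Since $\dim_N F_t\leq\dim_N X\leq n$ and $Y$ is Lipschitz $(n-1)$-connected, such an extension is provided by Lang--Schlichenmaier. Its construction is the core of the argument: one takes a cover of $X\setminus F_t$ of multiplicity at most $n+1$ (available by the Nagata condition), forms the nerve $\mathcal{N}$ of the cover together with a subordinate Lipschitz partition of unity, and defines $\tilde u_t$ on $X\setminus F_t$ as the composition of the induced barycentric map $X\setminus F_t\to|\mathcal{N}|$ with a map $|\mathcal{N}|\to Y$ built inductively on the skeleta --- the Lipschitz $(k-1)$-connectedness of $Y$ being used to extend from the $(k-1)$-skeleton to the $k$-skeleton for each $k\leq n$. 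On $F_t$ one keeps $\tilde u_t=u$, so the two definitions glue into a globally $C_1t$-Lipschitz map $X\to Y$.

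Finally I would verify $\tilde u_t\to u$ in $N^{1,p}(X,V)$. Since $\lambda\circ\tilde u_t=\lambda\circ u$ on $F_t$, only $E_t$ contributes. Fixing once and for all a reference point $x^*$ with $\M g^p(x^*)<\infty$ (so that $x^*\in F_t$ and $\tilde u_t(x^*)=u(x^*)$ for all sufficiently large $t$), the Lipschitz estimate gives $\|\lambda\circ\tilde u_t(x)-\lambda(u(x^*))\|\leq C_1 t\,\diam X$ on $E_t$, hence $\|\tilde u_t-\lambda(u(x^*))\|_{L^p(E_t)}^p\lesssim t^p\mu(E_t)\to 0$; combined with the absolute continuity bound $\|\lambda\circ u-\lambda(u(x^*))\|_{L^p(E_t)}\to 0$ this yields $L^p$ convergence. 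For the Sobolev norm one checks that $\rho_t:=(C_1 t+g)\chi_{E_t}$ is an upper gradient of $\lambda\circ\tilde u_t-\lambda\circ u$: on any subarc of a rectifiable curve lying in $F_t$ the two maps agree, while on any subarc meeting $E_t$ the Lipschitz bound for $\tilde u_t$ and the upper-gradient inequality for $u$ each contribute. Then $\|\rho_t\|_{L^p}\leq C_1 t\,\mu(E_t)^{1/p}+\|g\|_{L^p(E_t)}$, which tends to zero by the decay estimates above. The main obstacle is the extension step: producing a Lipschitz map into $Y$ (not into the linear ambient $V$) with Lipschitz constant comparable to the original is exactly what forces both hypotheses of the theorem, and the Lang--Schlichenmaier construction is the nonelementary ingredient.
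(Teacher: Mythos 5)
Your proposal is correct and follows essentially the same route as the paper: the good set comes from the pointwise estimate of Proposition~\ref{pointwise}, the Lipschitz map into $Y$ (not just into $V$) comes from the Lang--Schlichenmaier extension theorem, and the $N^{1,p}$-convergence is established via the decay $t^p\mu(E_t)\to 0$ together with absolute continuity of the integral. The only technical point worth flagging is that Lemma~\ref{us} requires the domain of the given Lipschitz map to be a \emph{closed} subset of $X$; the paper therefore passes to a closed $F_t$ inside the sublevel set of the maximal function, whereas you use the sublevel set itself, which is closed only after one removes the a.e.\ exceptional set from Proposition~\ref{pointwise} and appeals to lower semicontinuity of the maximal function (or simply inserts the same inner-regularity step the paper uses). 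Your direct verification that $(C_1t+g)\chi_{E_t}$ is an upper gradient of the difference is precisely what the paper packages as a short localization lemma.
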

\begin{remark}
If a space supports the $p$-Poincar\'e inequality, then it supports the $q$-Poincar\'e
inequality for all $p\leq q<\infty$ (by H\"older's inequality) and hence we have density in 
$N^{1,q}$ for all $q\geq p$. If in addition $X$ is complete and $p>1$, then 
there is $\eps>0$ such that the space supports the $q$-Poincar\'e inequality for all 
$p-\eps<q<\infty$, \cite{keithz}, and thus the density is true for this range of $q$.
\end{remark}
\begin{remark}
If $X$ supports $1$-Poincar\'e inequality, then we have density for all $1\leq p<\infty$.
This is consistent with results for mappings between manifolds: if
$\pi_1(\N)=\ldots=\pi_{n-1}(\N)=0$, and $\dim\M\leq n$,
then Proposition~\ref{h} gives density for $1\leq p<n$,
but if $p\geq n$ we always have density by the result of Schoen-Uhlenbeck \cite{schoenu1,schoenu2}.
\end{remark}
\begin{remark}
Theorems~\ref{hs2} and~\ref{hs3} show that the set $Z$ is not Lipschitz ($n-1$)-connected,
but it can be checked more directly.
Every Lipschitz mapping $f:\Sph^k\to Z$, $k\geq 0$
admits a Lipschitz extension $F:\B^{k+1}\to Z$, but 
using the construction of $Z$ one can show that for $k=0$ and $k=n-1$,
there is no
constant $C\geq 1$ with the property that every $L$-Lipschitz mapping
$f:\Sph^{k}\to Z$ admits a $CL$-Lipschitz extension $F:\B^{k+1}\to Z$.
This is a good exercise for the reader. We do not provide details here, because 
this direct argument will play no role in the paper.
\end{remark}
\begin{remark}
According to \cite[Proposition~2.13]{heinonen}
Lipschitz $n$-connected sets in $\bbbr^{n+1}$ are Lipschitz 
retracts of $\bbbr^{n+1}$. This also follows from a more difficult
result of Lang and Schlichenmaier, Lemma~\ref{us} below. 
From this fact the density of Lipschitz mappings
in $W^{1,p}$ for all $1\leq p<\infty$ easily follows, 
see also \cite[Theorem~1.3]{hajlaszGAFA}.
\end{remark}

It was proved in \cite{wengery} that the Heisenberg group $\bbbh_n$ is Lipschitz
$(n-1)$-connected and hence we obtain
\begin{corollary}
Suppose that the space $(X,d,\mu)$ supports
the $p$-Poincar\'e inequality and $\dim_N X\leq n$.
Then Lipschitz mappings $\lip(X,\bbbh_n)$ are dense in 
$N^{1,p}(X,\bbbh_n)$.
\end{corollary}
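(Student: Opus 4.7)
The statement is a direct application of Theorem~\ref{hs2} to $Y=\bbbh_n$, so my plan is essentially to check that the hypotheses are met and then invoke the theorem.

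The assumptions on $X$---support of the $p$-Poincar\'e inequality and $\dim_N X\le n$---are given in the corollary itself, so nothing needs to be verified there. The Heisenberg group $\bbbh_n$ is separable, since as a topological space it is homeomorphic to $\bbbr^{2n+1}$, so the target satisfies the separability hypothesis of Theorem~\ref{hs2}. The only nontrivial ingredient is Lipschitz $(n-1)$-connectedness of $\bbbh_n$: one needs a constant $\gamma\ge 1$ such that every $L$-Lipschitz map $\Sph^k\to \bbbh_n$ with $0\le k\le n-1$ admits a $\gamma L$-Lipschitz extension to $\B^{k+1}$. This is exactly the theorem of Wenger and Young \cite{wengery} cited in the sentence preceding the corollary.

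With these three points in place, Theorem~\ref{hs2} immediately yields density of $\lip(X,\bbbh_n)$ in $N^{1,p}(X,\bbbh_n)$. There is no genuine obstacle at this stage; the substantive work is packaged into the two external results being combined, Theorem~\ref{hs2} on the one hand and the Wenger--Young extension theorem on the other. The only conceptual point worth emphasizing is that the indices match tightly: the Lipschitz $(n-1)$-connectedness of $\bbbh_n$ is precisely what pairs with the Nagata-dimension bound $\dim_N X\le n$ required by Theorem~\ref{hs2}, which is why this corollary covers the natural range of domains one expects for targets into $\bbbh_n$.
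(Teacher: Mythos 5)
Your argument is exactly the paper's: quote the Wenger--Young theorem that $\bbbh_n$ is Lipschitz $(n-1)$-connected, observe that $\bbbh_n$ is separable, and apply Theorem~\ref{hs2}. Correct and identical in approach.
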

This result generalizes Theorem~1.2(b) from \cite{DHLT}, where the case of $X$ being 
a compact manifold was considered.

Notation is pretty standard. By $C$ we will denote a general constant whose value
may change in a single string of estimates. The paper is organized as follows. 
In Section~\ref{proof} we prove Theorem~\ref{hs2} and in Section~\ref{example}
we prove Theorems~\ref{hs1} and~\ref{hs3}.

\section{Proof of Theorem~\ref{hs2}}
\label{proof}

The following result due to Lang and Schlichenmaier \cite[Theorem~1.5]{langs}
will play a fundamental role in the proof.
\begin{lemma}
\label{us}
Suppose that $X$ and $Y$ are metric spaces such that $\dim_N X\leq n$ and 
$Y$ is Lipschitz $(n-1)$-connected. Then there is a constant $C\geq 1$ such that
for any closed set $Z\subset X$ and any $L$-Lipschitz map $f:Z\to Y$
there is a $CL$-Lipschitz extension $F:X\to Y$.
\end{lemma}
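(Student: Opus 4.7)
The plan is to adapt the classical ``Whitney cover / nerve / induction over skeleta'' scheme to the metric setting, with the Nagata dimension bound supplying the role of finite covering dimension and the Lipschitz $(n-1)$-connectedness replacing the standard topological $k$-connectedness arguments. Write $U=X\setminus Z$ and, for $x\in U$, $d(x)=\dist(x,Z)$. For each integer $j\in\zed$, apply the Nagata dimension hypothesis at scale $s_j=2^{-j}$ to obtain a cover of $X$ by sets of diameter $\le Cs_j$ such that every ball of radius $s_j$ meets at most $n+1$ of its members. Discard members not meeting the Whitney annulus $\{x\in U:2^{-j-1}\le d(x)\le 2^{-j+1}\}$ and collect the survivors over all scales $j$ into a family $\{A_\alpha\}_{\alpha\in\mathcal A}$. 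Since for each $x\in U$ only boundedly many scales $j$ contribute, the family has bounded overlap on $U$ with a bound $M=M(n)$.

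Next I form the nerve $N$ of $\{A_\alpha\}$; the multiplicity bound yields $\dim N\le n$. For each $\alpha$ I select a basepoint $z_\alpha\in Z$ with $\dist(A_\alpha,z_\alpha)\le 2\diam A_\alpha$, so that $A_\alpha\cap A_\beta\neq\emptyset$ forces $d(z_\alpha,z_\beta)\le C'\max(\diam A_\alpha,\diam A_\beta)$. A standard distance-function partition of unity $\{\psi_\alpha\}$ subordinate to a slight open enlargement of $\{A_\alpha\}$ has $\lip(\psi_\alpha)\le C/\diam A_\alpha$ and produces a barycentric-coordinate map $\pi:U\to|N|$, locally Lipschitz with $\lip(\pi|_{B(x,d(x)/4)})\le C/d(x)$ once each simplex of $|N|$ is metrized so that its diameter matches $\max_j\diam A_{\alpha_j}$. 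Set $\widetilde F(v_\alpha)=f(z_\alpha)$ on the $0$-skeleton; proceeding by induction on $k=1,\ldots,n$, for each $k$-simplex the previous stage gives a $C_{k-1}L$-Lipschitz map of $\Sph^{k-1}$ into $Y$, and since $k-1\le n-1$ the Lipschitz $(n-1)$-connectedness produces a $\gamma C_{k-1}L$-Lipschitz extension to the simplex. This yields $\widetilde F:|N|\to Y$ with $\lip(\widetilde F|_\sigma)\le C(n,\gamma)L$ per simplex $\sigma$.

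Define $F(x)=\widetilde F(\pi(x))$ for $x\in U$ and $F(x)=f(x)$ for $x\in Z$. For $x,y\in U$ with $d(x,y)\le d(x)/8$, composing the local Lipschitz bounds for $\pi$ and $\widetilde F$ on simplices of diameter $\sim d(x)$ gives $d_Y(F(x),F(y))\le C(n,\gamma)L\,d(x,y)$; for $y\in Z$ one chains through a basepoint $z_\alpha$ with $A_\alpha\ni x$, using $d(z_\alpha,y)\le Cd(x,y)$ and the $L$-Lipschitz behavior of $f$ on $Z$. The main obstacle is the quantitative bookkeeping: the constants $C_k$ produced by the inductive extensions must stay bounded in terms of $n$ and $\gamma$ alone through all $n$ steps, and the choices of scale and of basepoints $z_\alpha$ must be tight enough that $\widetilde F\circ\pi(x)\to f(y)$ as $x\in U$ tends to $y\in Z$, thereby securing continuity of $F$ across $Z$ and closing the Lipschitz estimate.
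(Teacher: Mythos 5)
This lemma is not proved in the paper; it is cited as Theorem~1.5 of Lang and Schlichenmaier \cite{langs}, which the authors describe in a later remark as ``a more difficult result.'' Your outline --- a Whitney-type cover of $X\setminus Z$ built from the Nagata condition, a partition of unity and barycentric map into the nerve, extension across skeleta via Lipschitz $(n-1)$-connectedness, and a gluing estimate across $Z$ --- matches the scheme used in that reference, so the strategy is the right one.

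The difficulty is that your construction has a structural gap beyond the ``quantitative bookkeeping'' you flag. You assert that the bounded overlap of $\{A_\alpha\}$ yields $\dim N\le n$, but that requires the cover to have multiplicity $n+1$, and the cover you build does not. At each fixed scale $s_j$ the Nagata cover has multiplicity at most $n+1$; after discarding, you still draw sets from several adjacent scales $j$ at any point of $U$, so the overlap is bounded only by some $M(n,C)>n+1$, and the nerve may have dimension up to $M-1>n$. (Moreover, if the Nagata constant $C$ is large, a surviving set at scale $s_j$ can actually touch $Z$, so the Whitney estimate $\diam A_\alpha\lesssim \dist(A_\alpha,Z)$ also fails without an additional rescaling of the $s_j$.) Once the nerve exceeds dimension $n$, the skeleton induction cannot be closed: filling an $(n+1)$-simplex from its boundary $\Sph^n$ needs Lipschitz $n$-connectedness, which is not hypothesized. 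Constructing a variable-scale cover of $X\setminus Z$ with genuine multiplicity $n+1$ and Whitney diameter control, so that the nerve has dimension $\le n$, is the delicate combinatorial core of Lang and Schlichenmaier's argument; it does not follow from taking the Nagata covers at dyadic scales and discarding the sets that miss the Whitney annuli.
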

Since the metric in $N^{1,p}(X,Y)$ is defined via an isometric embedding of $Y$ into a
Banach space, we may assume that $Y\subset V$ is a subset of a Banach space $V$. 
Let $u\in N^{1,p}(X,Y)\subset N^{1,p}(X,V)$. 
Let $0\leq g\in L^p(X)$ be an upper gradient of $u$.
According to Proposition~\ref{pointwise}
$$
\Vert u(x)-u(y)\Vert_V \leq Cd(x,y)\left( (\M g^p(x))^{1/p}+(\M g^p(y))^{1/p}\right).
$$
Let
$$
E_t = \{x\in X: (\M g^p(x))^{1/p}\leq t \}.
$$
It follows from the weak type estimates of the maximal function that $t^p\mu(X\setminus E_t)\to 0$
as $t\to \infty$. Hence we can find a closed set $F_t\subset E_t$ such that $t^p\mu(X\setminus F_t)\to 0$.
The function $u$ restricted to $F_t$ is $2Ct$-Lipschitz continuous. 
According to Lemma~\ref{us} there is a $C't$-Lipschitz
extension $u_t:X\to Y$. It remains to prove that $\Vert u-u_t\Vert_{N^{1,p}}\to 0$ as $t\to\infty$.
Since $X$ is bounded and the mapping $u_t$ is $C't$-Lipschitz, we conclude that 
$\Vert u_t\Vert_V\leq C+Ct\diam X\leq C'(1+t)$. Thus
\begin{eqnarray*}
\int_X \Vert u-u_t\Vert_V^p 
& = &\int_{X\setminus F_t} \Vert u-u_t\Vert_V^p \\
& \leq & C\int_{X\setminus F_t} \Vert u\Vert_V^p + C(1+t)^p\mu(\{ X\setminus F_t\}) \to 0
\end{eqnarray*}
as $t\to \infty$. The following elementary lemma shows a localization
property of upper gradients;
the proof is quite standard and left to the reader.
\begin{lemma}
If $0\leq g\in L^p(X)$ is an upper gradient of $f\in N^{1,p}(X,V)$ and $f$ is constant on a closed set $E\subset X$,
then $h=g\chi_{X\setminus E}$ is an upper gradient of $f$.
\end{lemma}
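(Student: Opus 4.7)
The plan is to verify the upper-gradient inequality for $h$ along an arbitrary rectifiable curve $\gamma:[a,b]\to X$, which I would parameterize by arclength. Set $\phi=f\circ\gamma$, let $K=\gamma^{-1}(E)\subset[a,b]$ (closed since $E$ is closed and $\gamma$ is continuous), and write the open set $U=[a,b]\setminus K$ as a countable disjoint union of relatively open intervals $J_i=(\alpha_i,\beta_i)$, allowing $\alpha_i=a$ or $\beta_i=b$. Since $f$ is constant on $E$ with some value $c\in V$, the composition $\phi$ is identically equal to $c$ on $K$. The goal is to show
$$
\Vert\phi(b)-\phi(a)\Vert\le\int_U g(\gamma(\tau))\,d\tau=\int_\gamma h.
$$

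The key observation, which makes the argument pleasantly clean, is that the upper-gradient property of $g$ only needs to be invoked on at most two of the intervals $J_i$; constancy of $\phi$ on $K$ absorbs everything else. I would split into cases according to where $\gamma(a)$ and $\gamma(b)$ lie. If both are in $E$, then $\phi(a)=\phi(b)=c$ and the inequality is trivial. If $\gamma(a)\in E$ but $\gamma(b)\notin E$ (the symmetric case is analogous), then $b$ lies in some unique component $J_{i_0}$ with $\beta_{i_0}=b$, and maximality of $J_{i_0}$ forces its left endpoint $\alpha_{i_0}$ to lie in $K$; hence $\phi(\alpha_{i_0})=c=\phi(a)$, and applying the upper-gradient inequality of $g$ to the subarc $\gamma|_{[\alpha_{i_0},b]}$ yields $\Vert\phi(b)-\phi(a)\Vert=\Vert\phi(b)-\phi(\alpha_{i_0})\Vert\le\int_{J_{i_0}}g(\gamma(\tau))\,d\tau\le\int_U g(\gamma(\tau))\,d\tau$ (the single endpoint $\alpha_{i_0}$ contributes Lebesgue measure zero). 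If neither endpoint is in $E$ but both lie in the same $J_i$, then $K=\emptyset$, so $h=g$ along $\gamma$ and the original upper-gradient property of $g$ suffices. Finally, if $\gamma(a),\gamma(b)\notin E$ lie in distinct components $J_{i_1}$ and $J_{i_2}$ (with $\alpha_{i_1}=a$ and $\beta_{i_2}=b$), then maximality forces $\beta_{i_1}$ and $\alpha_{i_2}$ to lie in $K$, so $\phi(\beta_{i_1})=\phi(\alpha_{i_2})=c$; applying the upper-gradient inequality of $g$ separately to $\gamma|_{[a,\beta_{i_1}]}$ and to $\gamma|_{[\alpha_{i_2},b]}$, then inserting the common value $c$ via the triangle inequality, gives $\Vert\phi(b)-\phi(a)\Vert\le\int_{J_{i_1}}g(\gamma)\,d\tau+\int_{J_{i_2}}g(\gamma)\,d\tau\le\int_U g(\gamma)\,d\tau$.

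The only mild technical point requiring care is to verify that the endpoints $\alpha_{i_0}$, $\beta_{i_1}$, $\alpha_{i_2}$ appearing above really lie in $K$. This is forced by the maximality of each $J_i$ as a connected component of the relatively open set $U\subset[a,b]$, combined with the hypothesis of the relevant case that the corresponding endpoint of $[a,b]$ itself does not lie in $U$: any such endpoint outside $U$ cannot belong to another $J_j$ without violating maximality. A pleasant feature of this strategy is that no summation or limit argument over the possibly infinite family $\{J_i\}$ is required; constancy of $\phi$ on $K$ reduces the whole problem to at most two invocations of the upper-gradient property of $g$.
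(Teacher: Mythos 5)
Your proof is correct. The paper in fact does not supply an argument here — it states that the proof ``is quite standard and left to the reader'' — so there is no proof of record to compare against, but what you wrote is a clean and complete version of the standard argument.

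The structure is right: parameterize $\gamma$ by arclength so that $\int_\gamma h=\int_U g(\gamma(\tau))\,d\tau$ with $U=[a,b]\setminus\gamma^{-1}(E)$, decompose $U$ into its (at most countably many, relatively open) components, and exploit the constancy of $f\circ\gamma$ on $K=\gamma^{-1}(E)$ to reduce the estimate to the upper-gradient inequality for $g$ on at most two subarcs. Your verification that the relevant interior endpoints $\alpha_{i_0}$, $\beta_{i_1}$, $\alpha_{i_2}$ lie in $K$ — forced by the components of $U$ being relatively open and pairwise disjoint — is exactly the point one must not skip, and you handled it correctly. Two small remarks on exposition: (i) the four cases can be merged into a single statement by letting $a'$ be the right endpoint of the component of $a$ in $U$ if $a\in U$ (and $a'=a$ otherwise), and similarly $b'$ the left endpoint of the component of $b$; then $\phi(a')=\phi(a)$ or $a'\in K$, likewise for $b'$, and one applies the triangle inequality once — this avoids repeating near-identical case computations; (ii) you invoke subarcs $\gamma|_{[s,t]}$ being rectifiable so that $g$'s upper-gradient property applies to them, which is true and worth a one-line acknowledgment since it is the only place the hypothesis on $g$ is used. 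Neither remark affects correctness.
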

Since $u_t$ is $Ct$-Lipschitz, the constant function $Ct$ is its upper gradient and since $u-u_t=0$
on the closed set $F_t$ it follows from the lemma that $h=(g+Ct)\chi_{X\setminus F_t}$ is an upper
gradient of $u-u_t$. We have
$$
\int_X h^p\, d\mu \leq \int_{X\setminus F_t} (g+Ct)^p\, d\mu \to 0
$$
as $t\to\infty$. Thus $\Vert u-u_t\Vert_{N^{1,p}}\to 0$ as $t\to\infty$.
The proof is complete.
\hfill $\Box$

\section{Example}
\label{example}

In this section we will prove Theorems~\ref{hs1} and~\ref{hs3}. In the first subsection we will provide
details of the construction of the set $Y$ and we will prove all its properties except the lack of
density of Lipschitz mappings.
In the second subsection we will construct set $Z$ and we will prove that $\pi_k^\lip(Z)=0$ for all $k\geq 0$.
In the last subsection we will prove the lack of density of Lipschitz mappings in
$W^{1,n}(\Sph^n,Y)$ and $W^{1,n}(\Sph^n,Z)$.

\subsection{Construction of $Y$}
The main idea is to pick a point $p \in \Sph^n$ and add continuous oscillations into the normal direction around that point. 
The oscillations should be so that the resulting set is still homeomorphic to the sphere with the 
homeomorphism $\phi$ being in $W^{1,n}$.

The amplitude of the oscillations is described by $a \in C^0([0,\infty))\cap C^\infty((0,\infty))$
$$
a(t) := \begin{cases}
          0 \quad & t = 0,\\
          \displaystyle{\frac{\sin(\log(\log({e}/{t})))}{1+\log(\log({e}/{t}))}} \quad &t \in (0,1],\\
          \mbox{smooth extension} \quad &t \in (1,3/2),\\
          0 \quad & t \geq 3/2.
         \end{cases}
$$
Fix a point $p\in\Sph^n$ and define
$\phi:\Sph^n\to\bbbr^{n+1}$ by
$$
\phi(x)=\left(1+\frac{1}{2}a(|x-p|)\right)x.
$$
The set $Y$ is defined as the image of the mapping $\phi$, $Y=\phi(\Sph^n)$.
Since $\phi \in C^0(\Sph^n,\bbbr^{n+1})$ translates points of the sphere along the direction 
of the outer normal to the sphere, $\phi$ has no self-intersections and thus it is 
a homeomorphism. Clearly $Y\setminus \{ p\}$ is a smooth manifold.
Let $\Phi=\phi^{-1}:Y\to\Sph^n$ be the inverse homeomorphism. 
Obviously, $\Phi$ is the restriction of the projection mapping
$$
\pi(x)=\frac{x}{|x|}\in C^\infty(\bbbr^{n+1}\setminus \{ 0\},\Sph^n)
$$
to $Y$. Hence $\Phi$ is Lipschitz continuous and its restriction to $Y\setminus \{ p\}$
is a smooth diffeomorphism. 
The function $a$ was chosen in order to satisfy the claim of the next lemma.
\begin{lemma}
\label{pr:fbehviour}
\begin{equation}
\label{p1}
\int_{0}^s |a'(t)|\ dt = \infty \quad \mbox{for any $s > 0$},
\end{equation}
but
\begin{equation}
\label{p2}
 \int_{0}^1 |a'(t)|^n t^{n-1}\ dt < \infty \quad \mbox{for all $n \geq 2$.}
\end{equation}
\end{lemma}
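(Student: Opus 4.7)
The substitution that organizes the computation is $u=u(t):=\log\log(e/t)$ on $(0,1]$, because $a(t)=f(u(t))$ with $f(u)=\sin u/(1+u)$. Writing $L(t):=\log(e/t)$, one has $u(1)=0$, $u(t)\to\infty$ as $t\to 0^+$, and $u'(t)=-1/(tL(t))$, so the key differential identity $du=-dt/(tL(t))$ converts the relevant $t$-integrals into clean $u$-integrals. A direct differentiation gives
\[
a'(t)=\frac{-1}{tL(t)}\left(\frac{\cos u(t)}{1+u(t)}-\frac{\sin u(t)}{(1+u(t))^{2}}\right).
\]

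For (p2), I use the crude bound $|f'(u)|\le 2$ to get $|a'(t)|\le 2/(tL(t))$, so that
\[
\int_{0}^{1}|a'(t)|^{n}\,t^{n-1}\,dt\le 2^{n}\int_{0}^{1}\frac{dt}{t\,L(t)^{n}}.
\]
Then I substitute $v=L(t)=1-\log t$, $dv=-dt/t$, to reduce the right-hand side to $2^{n}\int_{1}^{\infty}v^{-n}\,dv$, which is finite for every $n\ge 2$.

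For (p1), I apply the reverse triangle inequality to the bracketed factor of $a'(t)$:
\[
|a'(t)|\ge\frac{|\cos u(t)|}{tL(t)(1+u(t))}-\frac{1}{tL(t)(1+u(t))^{2}}.
\]
Using $du=-dt/(tL(t))$, the subtracted term integrates to $\int_{u(s)}^{\infty}(1+u)^{-2}\,du<\infty$, while the leading term integrates to $\int_{u(s)}^{\infty}|\cos u|/(1+u)\,du$, which diverges, since on each period $[k\pi,(k+1)\pi]$ its contribution is comparable to $1/k$. Subtracting a finite quantity from an infinite one gives $\int_{0}^{s}|a'(t)|\,dt=\infty$.

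The only delicate point is the lower bound in (p1): one cannot simply drop the $(1+u)^{-2}$ correction and assert $|a'|\ge|\cos u|/(tL(1+u))$, because the two terms inside the bracket may have the same sign. The correct argument is to subtract the second term explicitly, observe that it is integrable in $t$ after the change of variables, and conclude that the logarithmic divergence carried by $|\cos u|/(1+u)$ survives. Everything else is routine calculus once the substitution $u=\log\log(e/t)$ has been introduced.
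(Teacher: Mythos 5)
Your proof is correct and is essentially the paper's own argument: the same substitution $u=\log\log(e/t)$, the same reverse-triangle-inequality split for \eqref{p1} into a divergent $\int|\cos u|/(1+u)\,du$ minus a convergent $\int(1+u)^{-2}\,du$, and the same crude bound $|a'(t)|\le 2/(t\log(e/t))$ followed by the substitution $v=\log(e/t)$ for \eqref{p2}. The ``delicate point'' you flag is handled identically in the paper.
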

\begin{proof}
Since
$$
(\log(\log({e}/{t})))' = -\frac{1}{t\log({e}/{t})}\, ,
$$
for $0<t<1$ we have
$$
a'(t)
 = 
\frac{-\cos(\log(\log({e}/{t})))}{t\log({e}/{t})\cdot (1+\log(\log({e}/{t})))}
 + 
\frac{\sin(\log(\log({e}/{t})))}{t\log({e}/{t})\cdot (1+\log(\log({e}/{t})))^2}\,.
$$
Thus for any $0<s<1$
the change of variables $x(t)=\log(\log(e/t))$ yields
\begin{eqnarray*}
\int_0^s |a'(t)|\, dt 
& = &
\int_{x(s)}^\infty \left| \frac{-\cos x}{x+1} + \frac{\sin x}{(x+1)^2}\right|\, dx \\
& \geq &
\int_{x(s)}^\infty \frac{|\cos x|}{x+1}\, dx - \int_{x(s)}^\infty \frac{dx}{(x+1)^2} = \infty.
\end{eqnarray*}
On the other hand for $n\geq 2$ we have
$$
\int_0^1 |a'(t)|^n t^{n-1}\, dt \leq 2^n\int_0^1 \frac{dt}{t\log^n(e/t)} =2^n\int_1^\infty \frac{dx}{x^n} <\infty.
$$ 
The proof is complete.
\end{proof}
\begin{lemma}
\label{sob_hom}
$\phi\in W^{1,n}(\Sph^n,Y)$.
\end{lemma}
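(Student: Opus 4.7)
The plan is to first compute the classical derivative of $\phi$ away from $p$, show it lies in $L^n$ using the sharp integrability property~\eqref{p2}, and then argue that the point singularity at $p$ is removable so that the classical derivative coincides with the weak derivative.

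Since $\phi$ is smooth on $\Sph^n\setminus\{p\}$ and continuous at $p$ (with $\phi(p)=p$, because $a(0)=0$), differentiating $\phi(x)=(1+\tfrac12 a(|x-p|))x$ along tangent directions to $\Sph^n$ and using $|x|=1$ together with the boundedness of $a$ yields a pointwise bound of the form
$$|D\phi(x)|\leq C\bigl(1+|a'(|x-p|)|\bigr)\quad\text{for every } x\in\Sph^n\setminus\{p\}.$$
To integrate this, I would use polar coordinates $(t,\omega)\in(0,2)\times\Sph^{n-1}$ on $\Sph^n$ centered at $p$, where $t=|x-p|$ and the surface measure decomposes as $J(t,\omega)\,dt\,d\omega$ with $J(t,\omega)\simeq t^{n-1}$ uniformly in $\omega$ for small $t$. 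Then
$$\int_{\Sph^n} |a'(|x-p|)|^n\,d\sigma \leq C\int_0^{2} |a'(t)|^n\, t^{n-1}\,dt<\infty$$
by~\eqref{p2} of Lemma~\ref{pr:fbehviour} (the integrand vanishes for $t\geq 3/2$). Combined with the pointwise bound, this gives $D\phi\in L^n(\Sph^n\setminus\{p\})$.

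The remaining and main step is to check that this pointwise-a.e.\ derivative is the weak derivative on all of $\Sph^n$, i.e.\ that the singleton $\{p\}$ carries no distributional mass. I would do this by a Lipschitz cutoff approximation. Pick $\eta_\eps\in\lip(\Sph^n)$ with $\eta_\eps=0$ on $B(p,\eps)$, $\eta_\eps=1$ outside $B(p,2\eps)$, and $|\nabla\eta_\eps|\leq C/\eps$, and set
$$\phi_\eps:=\phi(p)+\eta_\eps\bigl(\phi-\phi(p)\bigr).$$
Then $\phi_\eps$ is Lipschitz on $\Sph^n$, hence in $W^{1,n}(\Sph^n,\bbbr^{n+1})$, and $\phi_\eps\to\phi$ uniformly (in particular in $L^n$). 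Using the modulus of continuity $\omega$ of $\phi$ at $p$, which satisfies $\omega(r)\to 0$ as $r\to 0^+$, one estimates
$$\int_{B(p,2\eps)} |\nabla\eta_\eps|^n|\phi-\phi(p)|^n\,d\sigma\leq C\eps^{-n}\,\omega(2\eps)^n\,\sigma(B(p,2\eps))\leq C\,\omega(2\eps)^n\longrightarrow 0,$$
while the remaining contribution $(1-\eta_\eps)|D\phi|\to 0$ in $L^n$ by dominated convergence (since $D\phi\in L^n$ and the indicator tends pointwise to $0$). Hence $\{\phi_\eps\}$ is Cauchy in $W^{1,n}(\Sph^n,\bbbr^{n+1})$, so its limit $\phi$ belongs to $W^{1,n}(\Sph^n,\bbbr^{n+1})$, with weak derivative a.e.\ equal to the classical one. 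Since $\phi(\Sph^n)\subset Y$, this yields $\phi\in W^{1,n}(\Sph^n,Y)$.

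The main obstacle is this last removability step: the derivative bound $|D\phi|\lesssim|a'(t)|$ is borderline in the sense that $a\notin W^{1,1}$ near $0$ by~\eqref{p1}, so the Sobolev regularity of $\phi$ really depends on the dimensional weight $t^{n-1}$ and the continuity of $\phi$ at $p$. The cutoff calculation above is what turns the qualitative continuity of $\phi$ at $p$ into a quantitative control of the error term $|\nabla\eta_\eps|^n|\phi-\phi(p)|^n$ against the volume of $B(p,2\eps)$.
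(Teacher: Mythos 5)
Your proof takes essentially the same route as the paper: bound $|D\phi|$ by $C(1+|a'(|x-p|)|)$, integrate in polar coordinates around $p$, and invoke \eqref{p2}. The paper is terser on the removability step — it simply asserts that it suffices to show $D\phi\in L^n$ near $p$, relying on the standard fact that a singleton has zero $W^{1,n}$-capacity (for $n\geq 2$), whereas you supply the explicit cutoff argument establishing that the classical derivative is the weak derivative; that is a correct and useful elaboration of the same proof, not a different one.
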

\begin{proof}
It suffices to show that the derivative of $\phi$ is integrable with the exponent $n$
in some neighborhood of the singularity $p$. Using a suitable coordinate system in a neighborhood of $p\in\Sph^n$
both in the domain and in the target $\bbbr^{n+1}$, the image of $\phi$ becomes the graph of the function $a(|x|)$.
The fact that the function $a(|x|)$ has derivative integrable with exponent $n$ follows from \eqref{p2}
after integration in the polar coordinate system:
$$
\int_{\B^n(0,1)} |\nabla (a(|x|))|^n\, dx =C\int_0^1 |a'(t)|^nt^{n-1}\, dt<\infty.
$$  
The proof is complete.
\end{proof}

In the last subsection we will prove that the mapping $\phi\in W^{1,n}(\Sph^n,Y)$ cannot be approximated by Lipschitz 
maps $\lip(\Sph^n,Y)$ in the Sobolev norm. 
But now we will prove that $\pi^\lip_k(Y)=0$ for all $k\geq 1$. This immediately follows from
the next lemma.
\begin{lemma}
\label{pi_lip}
If $k\geq 1$ and $f:\Sph^k\to Y$ is Lipschitz continuous, then 
$f(\Sph^k)=\{ p\}$ or $p\not\in f(\Sph^k)$.
\end{lemma}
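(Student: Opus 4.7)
My plan is a proof by contradiction: suppose $p\in f(\Sph^k)$ yet $f$ is not constantly $p$, so there exist $x_0,x_1\in\Sph^k$ with $f(x_0)=p$ and $f(x_1)\neq p$. Since $k\geq 1$, I can join $x_0$ to $x_1$ by a Lipschitz path $\gamma:[0,1]\to\Sph^k$, and then $\sigma:=f\circ\gamma:[0,1]\to Y$ is a Lipschitz curve of finite length with $\sigma(0)=p$ and $\sigma(1)\neq p$. The strategy is to show that any such curve in $Y$ must have \emph{infinite} length, giving the contradiction.

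To quantify the length, I project $\sigma$ back to the sphere via the Lipschitz inverse $\Phi=\phi^{-1}$, setting $\beta:=\Phi\circ\sigma:[0,1]\to\Sph^n$ and $\rho(t):=|\beta(t)-p|$. Then $\beta$ and $\rho$ are Lipschitz with $\beta(0)=p$, $\rho(0)=0$, and $\rho(1)>0$. Because $\sigma(t)=\bigl(1+\tfrac12 a(\rho(t))\bigr)\beta(t)$ and $|\beta(t)|=1$, the distance from the origin is $|\sigma(t)|=1+\tfrac12\, a(\rho(t))$. The reverse triangle inequality yields
$$
\operatorname{length}(\sigma)\ \geq\ \operatorname{TV}_{[0,1]}\bigl(|\sigma|\bigr)\ =\ \tfrac12\,\operatorname{TV}_{[0,1]}(a\circ\rho),
$$
so it suffices to prove $\operatorname{TV}(a\circ\rho)=\infty$.

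The main obstacle will be that $\rho$ is not monotone and may oscillate wildly, so one cannot naively compare the variations of $a\circ\rho$ and $a$. I plan to circumvent this via Banach's indicatrix theorem after restricting to a good sub-interval. Set $t_0:=\sup\{t:\rho(t)=0\}<1$ and let $T_1\in(t_0,1]$ be the first time after $t_0$ at which $\rho$ attains the value $\rho(1)$. By the intermediate value theorem, $\rho$ maps $[t_0,T_1]$ continuously \emph{onto} $[0,\rho(1)]$. Banach's indicatrix theorem states that for any continuous $h$ on an interval, $\operatorname{TV}(h)=\int_\bbbr N(h,y)\,dy$ with $N(h,y)=\#\,h^{-1}(y)$, both sides possibly infinite. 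For every $y$ and every $s\in[0,\rho(1)]$ with $a(s)=y$, surjectivity of $\rho$ produces at least one $t\in[t_0,T_1]$ with $a(\rho(t))=y$, and distinct $s$ give distinct $t$; hence $N(a\circ\rho,y)\geq N(a|_{[0,\rho(1)]},y)$ for every $y$. Integrating in $y$ and appealing to \eqref{p1} yields
$$
\operatorname{TV}(a\circ\rho;[t_0,T_1])\ \geq\ \operatorname{TV}(a;[0,\rho(1)])\ =\ \int_0^{\rho(1)}|a'(s)|\,ds\ =\ \infty,
$$
contradicting the finite length of the Lipschitz curve $\sigma$.
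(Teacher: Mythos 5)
Your proof is correct and follows the same overall strategy as the paper: reduce the lemma to the claim that $p$ cannot be joined to any other point of $Y$ by a rectifiable curve, and then invoke $\eqref{p1}$. Where you differ is in how the length lower bound is established. The paper passes to local graph coordinates near $p$, exhibits the explicit curve obtained by slicing the graph of $a(|x|)$ with a plane, computes its length to be $\int_0^s\sqrt{1+a'(t)^2}\,dt=\infty$, and then simply asserts that ``no other curve connecting $p$ to $q$ can be shorter.'' You instead bound the length of an \emph{arbitrary} competing curve $\sigma$ from below: using the exact global identity $|\sigma(t)|=1+\tfrac12\,a(\rho(t))$ (valid because $\phi$ is a radial perturbation), the reverse triangle inequality gives $\operatorname{length}(\sigma)\geq\tfrac12\operatorname{TV}(a\circ\rho)$, and you then handle the possible non-monotonicity of $\rho$ by restricting to $[t_0,T_1]$, invoking the intermediate value theorem for surjectivity onto $[0,\rho(1)]$, and applying Banach's indicatrix theorem to conclude $\operatorname{TV}(a\circ\rho)\geq\operatorname{TV}(a;[0,\rho(1)])=\infty$. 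This actually supplies the rigorous justification for the step the paper leaves implicit, at the modest cost of appealing to Banach's indicatrix theorem; the paper's version is shorter but relies on the reader believing the planar slice is length-minimizing, which is not obvious without an argument of roughly the type you give.
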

Indeed, if $p\not\in f(\Sph^k)$, then the image of $\phi$ omits a certain neighborhood of $p$
and hence it is contained in a subset of $Y$ which is diffeomorphic to $\B^n$ and thus Lipschitz
contractible.

\begin{proof}[Proof of Lemma~\ref{pi_lip}]
Since $f$ is Lipschitz,
any two points in the image of $f(\Sph^n)$ can be connected by a rectifiable curve.
Thus it suffices to show that the point $p\in Y$ cannot be connected to any other point in $q\in Y$
by a rectifiable curve.
Using the coordinate system near $p$ as in the previous proof we can represent a neighborhood of
$p\in Y$ as the graph of the function $a(|x|)$ defined in $\B^n$. In this coordinate system $p=0$. 
Consider the plane through $p$ and $q$ that is orthogonal to $\B^n$. The curve obtained in the 
intersection of this plane with the graph of $a(|x|)$ is the graph of the function $a(t)$ of one variable.
This curve connects $p$ to $q$. If the Euclidean distance of the projections of
$p$ and $q$ onto $\B^n$ equals $s$, the length of this curve is
$$
\int_0^s \sqrt{1+a'(t)^2}\, dt = \infty
$$
by \eqref{p1}. Note that no other curve connecting $p$ to $q$ can be shorter.
The proof of the lemma is complete. This also completes the proof of the fact that
$\pi_k^\lip(Y)=0$ for $k\geq 1$.
\end{proof}

\subsection{Construction of $Z$}
In this section we will provide the construction of the set $Z$ from Theorem~\ref{hs3} and we will prove that
$\pi_k^\lip(Z)=0$ for all $k\geq 0$. The proof of the lack of density will be given in the next section.
The set $Z$ is simply obtained from $Y$ by adding the segment $I$ that connects the point $p$ through the center of the
sphere with the antipodal point of $\Sph^n$. Note that this antipodal point belongs also to $Y$.
Clearly the set $Z$ is now rectifiably connected since the point 
$p$ can be connected with the rest of the set through the added segment, so $\pi_0^\lip(Z)=0$.
The fact that $\pi_k^\lip(Z)$ for $k\geq 1$ is also easy. If $f:\Sph^k\to Z$ is Lipschitz, then 
$f(\Sph^k)$ is contained in $Y$ with a neighborhood of $p$ removed, plus $I$. This set is Lipschitz contractible.
\hfill $\Box$

\subsection{Proof of the lack of density of Lipschitz maps}
In this section we will complete the proofs of Theorems~\ref{hs1} and~\ref{hs3} by showing 
the lack of density of Lipschitz mappings.

\begin{proof}[Proof of Theorem~\ref{hs1}]
We will prove that the mapping $\phi\in W^{1,n}(\Sph^n,Y)$ cannot be approximated by Lipschitz mappings
$\lip(\Sph^n,Y)$ in the $W^{1,n}$ norm. Suppose by contrary that there is a sequence
$g_k\in \lip(\Sph^n,Y)$ such that 
\begin{equation}
\label{123}
\Vert\phi-g_k\Vert_{1,n}\to 0
\quad
\mbox{as $k\to\infty$.}
\end{equation}
By Lemma~\ref{pi_lip},
$p\not\in g_k(\Sph^n)$. As we know $\pi=\Phi=\vi^{-1}:Y\to \Sph^n$ is 
smooth in a neighborhood of $Y$ in $\bbbr^{n+1}$. 
Hence $f_k:=\Phi\circ g_k\in \lip(\Sph^n,\Sph^n\setminus\{ p\})$,
$\Phi\circ\phi={\rm id}\, :\Sph^n\to\Sph^n$ and \eqref{123} yields 
\begin{equation}
\label{234}
\Vert f_k-{\rm id}\Vert_{1,n}\to 0
\quad
\mbox{as $k\to\infty$.}
\end{equation}
Degree of a Lipschitz mapping can be expressed as the integral of
the Jacobian. The integral of the Jacobian is continuous in the $W^{1,n}$ norm. That easily follows
from the definition of the Jacobian and H\"older's inequality. Hence \eqref{234} implies
that the degree of the mapping $f_k$ converges to the degree of the identity mapping, i.e.
it converges to $1$, but this is impossible, because the mapping $f_k$ is not surjective and hence its degree 
equals zero.
The proof is complete.
\end{proof}

\begin{proof}[Proof of Theorem~\ref{hs3}]
Now we will prove that Lipschitz mappings $\lip(\Sph^n,Z)$ are not dense in $W^{1,n}(\Sph^n,Z)$.
Namely we will prove that $\phi\in W^{1,n}(\Sph^n,Y)\subset W^{1,n}(\Sph^n,Z)$ cannot be 
approximated by Lipschitz mappings $\lip(\Sph^n,Z)$. By contrary suppose that
$g_k\in \lip(\Sph^n,Z)$ is a Lipschitz approximation. 
Denote the endpoints of the segment $I$ by $p$ and $q$, i.e. $q$ is the antipodal point to $p$.
The image of $g_k$ is contained in $Y$ with a neighborhood of $p$ removed, plus $I$.
Thus composing $g_k$  with the retraction of $I$ onto $q$
gives a Lipschitz map $\tilde{g}_k:\Sph^n\to Y$. 
Since $g_k\to\phi$ in $L^n$, it easily follows that the measure of the set $g_k^{-1}(I)$
converges to zero as $k\to\infty$. Since $\tilde{g}_k$ differs from $g_k$ only on the set
$g_k^{-1}(I)$ one can easily show that $\Vert g_k-\tilde{g}_k\Vert_{1,n}\to 0$ and hence
$\Vert \tilde{g}_k-\phi\Vert_{1,n}\to 0$.
Now the result follows directly from the previous proof, because Lipschitz mappings $\tilde{g}_k$
into $Y$ cannot approximate $\phi$ in the $W^{1,n}$ norm.
\end{proof}

\end{document}